\newtheorem*{cor}{Corollary}
\newtheorem*{lem}{Lemma}
\newtheorem*{prop}{Proposition}
\theoremstyle{definition}
\newtheorem*{defn}{Definition}
\theoremstyle{definition}
\newtheorem{thm}{Theorem}
\newtheorem*{rem}{Remark}
\newtheorem{exam}{Example}
\newcounter{cnt}
\def\mydggeometry{\makeatletter\dg@YGRID=1\dg@XGRID=20\unitlength=0.003pt\makeatother}
\makeatother \theoremstyle{remark}
\numberwithin{equation}{section}
\let\bwdg\bigwedge
\def\bigwedge{{\textstyle\bwdg}}
\newcommand{\wt}{\operatorname{wt}}
\newcommand{\nc}{\newcommand}
\newcommand{\rnc}{\renewcommand}
\nc{\cal}{\mathcal} \nc{\goth}{\mathfrak} \rnc{\bold}{\mathbf}
\newcommand{\supp}{\operatorname{supp}}
\renewcommand{\Bbb}{\mathbb}
\nc\bomega{{\mbox{\boldmath $\omega$}}} \nc\bpsi{{\mbox{\boldmath $\Psi$}}}
 \nc\balpha{{\mbox{\boldmath $\alpha$}}}
 \nc\bpi{{\mbox{\boldmath $\pi$}}}
\nc\bmu{{\mbox{\boldmath $\mu$}}} \nc\bcN{{\mbox{\boldmath $\cal{N}$}}} \nc\bcm{{\mbox{\boldmath $\cal{M}$}}} \nc\blambda{{\mbox{\boldmath
$\lambda$}}}\nc\bnu{{\mbox{\boldmath $\nu$}}}
\newcommand{\lie}[1]{\mathfrak{#1}}
\def\section{\def\@secnumfont{\mdseries}\@startsection{section}{1}%
  \z@{.7\linespacing\@plus\linespacing}{.5\linespacing}%
  {\normalfont\scshape\centering}}
\def\subsection{\def\@secnumfont{\bfseries}\@startsection{subsection}{2}%
  {\parindent}{.5\linespacing\@plus.7\linespacing}{-.5em}%
  {\normalfont\bfseries}}
 \nc{\Hom}{\operatorname{Hom}}
  \nc{\mode}{\operatorname{mod}}
\nc{\End}{\operatorname{End}} \nc{\wh}[1]{\widehat{#1}} \nc{\Ext}{\operatorname{Ext}} \nc{\ch}{\text{ch}} \nc{\ev}{\operatorname{ev}}
\nc{\Ob}{\operatorname{Ob}} \nc{\soc}{\operatorname{soc}} \nc{\rad}{\operatorname{rad}} \nc{\head}{\operatorname{head}}
\def\ann{\operatorname{Ann}}
\nc\bc{\mathbb C}
 \nc{\Cal}{\cal} \nc{\Xp}[1]{X^+(#1)} \nc{\Xm}[1]{X^-(#1)}
\nc{\on}{\operatorname} \nc{\Z}{{\bold Z}} \nc{\J}{{\cal J}} \nc{\C}{{\bold C}} \nc{\Q}{{\bold Q}}
\renewcommand{\P}{{\cal P}}
\nc{\N}{{\Bbb N}} \nc\boa{\bold a} \nc\bob{\bold b} \nc\boc{\bold c} \nc\bod{\bold d} \nc\boe{\bold e} \nc\bof{\bold f} \nc\bog{\bold g}
\nc\boh{\bold h} \nc\boi{\bold i} \nc\boj{\bold j} \nc\bok{\bold k} \nc\bol{\bold l} \nc\bom{\bold m} \nc\bon{\bold n} \nc\boo{\bold o}
\nc\bop{\bold p} \nc\boq{\bold q} \nc\bor{\bold r} \nc\bos{\bold s} \nc\boT{\bold t} \nc\boF{\bold F} \nc\bou{\bold u} \nc\bov{\bold v}
\nc\bow{\bold w} \nc\boz{\bold z} \nc\boy{\bold y} \nc\ba{\bold A} \nc\bb{\bold B}  \nc\bd{\bold D} \nc\be{\bold E} \nc\bg{\bold
G} \nc\bh{\bold H} \nc\bi{\bold I} \nc\bj{\bold J} \nc\bk{\bold K} \nc\bl{\bold L} \nc\bm{\bold m} \nc\bn{\bold N} \nc\bo{\bold O} \nc\bp{\bold
P} \nc\bq{\bold Q} \nc\br{\bold R} \nc\bs{\bold S} \nc\bt{\bold T} \nc\bu{\bold U} \nc\bv{\bold V} \nc\bw{\bold W} \nc\bz{\bold Z} \nc\bx{\bold
x} \nc\KR{\bold{KR}} \nc\rk{\bold{rk}} \nc\het{\text{ht }}
\nc\toa{\tilde a} \nc\tob{\tilde b} \nc\toc{\tilde c} \nc\tod{\tilde d} \nc\toe{\tilde e} \nc\tof{\tilde f} \nc\tog{\tilde g} \nc\toh{\tilde h}
\nc\toi{\tilde i} \nc\toj{\tilde j} \nc\tok{\tilde k} \nc\tol{\tilde l} \nc\tom{\tilde m} \nc\ton{\tilde n} \nc\too{\tilde o} \nc\toq{\tilde q}
\nc\tor{\tilde r} \nc\tos{\tilde s} \nc\toT{\tilde t} \nc\tou{\tilde u} \nc\tov{\tilde v} \nc\tow{\tilde w} \nc\toz{\tilde z} \nc\woi{w_{\omega_i}}
\nc\chara{\operatorname{Char}}
\nc{\buf}{\underline{\bof}} 
\nc{\avee}{\alpha^{\vee}}
\nc{\bLambda}{\mathbf{\Lambda}}
\nc{\bMu}{\mathbf{m}}
\nc{\tleq}{\trianglelefteq}
\begin{document}

\author[Fourier]{Ghislain Fourier}
\address{Mathematisches Institut, Universit\"at zu K\"oln, Germany}
\address{School of Mathematics and Statistics, University of Glasgow}
\email{gfourier@math.uni-koeln.de}

\thanks{G.F. is partially funded by the DFG Priority Programme 1388 \enquote{Representation Theory}}

\date{\today}

\title{Weyl modules and Levi subalgebras}
\begin{abstract} 
For a simple complex Lie algebra $\lie g$ of classical type we are studying the restriction of modules of the current algebra to the current algebra of a Levi subalgebra of $\lie g$. More precisely, we are studying the highest weight components of simple modules, global and local Weyl modules. We are identifying necessary and sufficient conditions on a pair of a Levi subalgebra and a dominant integral weight, such that the highest weight component of the restricted module is a global (resp. a local) Weyl module.
\end{abstract}
\maketitle

\section*{Introduction}
\noindent
Let $\lie g$ be a finite-dimensional, simple complex Lie algebra of classical type with fixed triangular decomposition. Let $\lie a$ be a Levi subalgebra compatible with the root space decomposition of $\lie g$, in some literature $\lie a$ is called a regular subalgebra of $\lie g$ (\cite{Dyn52}). The induced map for the Cartan components $\pi: \lie h_{\lie a} \longrightarrow \lie h$ induces also a map on the level of dominant integral weights $\pi: P^+ \longrightarrow P^+_{\lie a}$.\\
\noindent
Let $V^{\lie g}(\lambda)$ denote the simple finite-dimensional $\lie g$-module of highest weight $\lambda$. By restricting the action we obtain a finite-dimensional $\lie a$-module whose highest weight component is isomorphic to the simple $\lie a$-module $V^\lie a(\pi(\lambda))$. 
We have the analogous picture in modular representation theory: starting with the Weyl module of highest weight $\lambda$ of $\lie g$, the Weyl module of highest weight $\pi(\lambda)$ for $\lie a$ is isomorphic to the submodule through the highest weight vector.\\
In this paper we extend the study of such restriction functors to the context of current algebras. The current algebra associated to $\lie g$ is the vector space $\lie g \otimes \bc[t]$ equipped with the Lie bracket induced by $[x \otimes p, y \otimes q] := [x,y] \otimes pq$. We study simple module, local and global Weyl modules of $\lie g \otimes \bc[t]$ with respect to the restricted action of $\lie a \otimes \bc[t]$.\\
Simple finite-dimensional modules are tensor products of evaluations of simple $\lie g$-modules\cite{Rao93} and parameterized by current highest weights. See Section~\ref{section-2}, where we will recall that a current highest weight can be identified with a finitely supported function $\psi: \bc \longrightarrow P^+$. We will show that similarly to the classical case, the $\lie a \otimes \bc[t]$ module generated through the highest weight vector of $V^{\lie a}(\psi)$ is simple and isomorphic to $V^{\lie a}(\pi \circ \psi)$ (Lemma~\ref{res-simple}).\\
One of the most interesting features of the category of finite-dimensional $\lie g$-modules is the existence of indecomposable, non-simple objects. Therefore, for the purpose of understanding this category, it is not sufficient to understand the simple modules only but it is necessary also to understand the indecomposables. A classification of the indecomposables is far from being known but in \cite{CP01} a special and very important class was introduced. The authors defined \textit{local Weyl modules} in analogy to modular representation theory, that is to any simple module $V^{\lie g}(\psi)$ they associated a cyclic, indecomposable, integrable, finite-dimensional module $W^{\lie g}(\psi)$ which is maximal (in a certain sense) with these properties. \\
Although the modules have been getting a lot of attention due to their special role within the category, they are of particular interest because of their strong connections to other important branches in representation theory, such as simple modules of quantum affine algebras (\cite{CP01}), Demazure modules (\cite{CL06}, \cite{FoL07}), fusion products (\cite{Nao12}), to name but a few.\\
In recent years local Weyl modules have been introduced in more general settings, for example by replacing $\bc[t]$ with a commutative, associative unital algebra (\cite{FL04}, \cite{CFK10}), by considering hyper loop algebras (\cite{JM07}), by considering twisted current algebras (\cite{CFS08}, \cite{FK12}) or even equivariant map algebras (\cite{FMS12}).\\
Throughout the last decade the so-called \textit{global Weyl module} $W^{\lie g}(\lambda)$, defined in \cite{CP01} as a projective object in the category of integrable $\lie g \otimes \bc[t]$-modules with $\lie g$-weights bounded above by $\lambda$, has appeared to carry a lot of the structure of the local Weyl modules (see Section~\ref{section-3} for more details). It was shown that they are also right modules for a certain symmetric algebra $\ba_{\lambda}^{\lie g}$, a polynomial ring in finitely many (determined by $\lambda$) variables. These global Weyl modules appear in various contexts, some far from the origin, as in symmetric functions (\cite{CL12}), q-Whittaker functions (\cite{BF12}) and others. This might justify the intensive study throughout the last decade (\cite{FL04}, \cite{CFK10}, \cite{FMS11}, \cite{BCM12} to name but a few).

\noindent
As previously stated, in this work we are studying the restriction of local and global Weyl module to current algebras of Levi subalgebras. So let $\lie a \subset \lie g$ be a simple Levi subalgebra of $\lie g$ and $\omega_k$ a fundamental weight for $\lie g$. We will introduce for the pair $(\lie a, \omega_k)$ the properties \textit{locally admissible} and \textit{globally admissible} (Definition~\ref{adm-defn}), and we will see that any globally admissible pair is locally admissible. The first main result is the following:
\begin{thm}\label{main-thm1}
Let $(\lie a, \omega_k)$ be locally admissible and $\psi$ be of $\lie g$-weight $\omega_k$. Then 
\[W^{\lie a}(\pi \circ \psi) \hookrightarrow W^{\lie g}(\psi).\] 
\end{thm}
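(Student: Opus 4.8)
The plan is to reduce to the case where $\psi$ is supported at a single point, to realize the candidate submodule as the $\lie a\tensor\bc[t]$-submodule of $W^{\lie g}(\psi)$ generated by the highest weight vector, to present it as a quotient of $W^{\lie a}(\pi\circ\psi)$ by a purely formal argument, and finally to invoke local admissibility to see that this quotient map is injective.

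First I would observe that, writing each value as $\psi(z)=\sum_i c_{z,i}\omega_i$ with $c_{z,i}\in\N$, the hypothesis that the $\lie g$-weight of $\psi$ is $\omega_k$ forces $c_{z,i}=0$ for $i\neq k$ and $\sum_z c_{z,k}=1$; hence $\psi$ is supported at a single point $z_0$ with $\psi(z_0)=\omega_k$. Pulling back along the automorphism $t\mapsto t-z_0$ of $\bc[t]$ — which preserves the inclusion $\lie a\tensor\bc[t]\subset\lie g\tensor\bc[t]$ and carries local Weyl modules to local Weyl modules with support translated — we may assume $z_0=0$, so that $W^{\lie g}(\psi)$ is the local Weyl module $W^{\lie g}(\omega_k)$ supported at the origin and $W^{\lie a}(\pi\circ\psi)=W^{\lie a}(\pi(\omega_k))$. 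If $k$ is not a node of the Dynkin diagram of $\lie a$, then $\langle\pi(\omega_k),\alpha_j^\vee\rangle=\langle\omega_k,\alpha_j^\vee\rangle=0$ for every node $j$ of $\lie a$, so $\pi(\omega_k)=0$, $W^{\lie a}(\pi(\omega_k))=\bc$, and the assertion is trivial; otherwise $\pi(\omega_k)=\omega_k^{\lie a}$, the fundamental weight of $\lie a$ attached to the node $k$.

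Next, let $w$ denote the cyclic generator of $W^{\lie g}(\omega_k)$ and put $M:=U(\lie a\tensor\bc[t])\,w$, a cyclic $\lie a\tensor\bc[t]$-module generated by a vector of $\lie h_{\lie a}$-weight $\omega_k^{\lie a}$. Since $\lie n_{\lie a}^+\subseteq\lie n^+$, the vector $w$ is annihilated by $\lie n_{\lie a}^+\tensor\bc[t]$, and it is annihilated by $\lie h_{\lie a}\tensor t\bc[t]$; moreover for each node $j$ of $\lie a$ the relation $(f_j\tensor 1)^{\langle\omega_k,\alpha_j^\vee\rangle+1}w=0$ holding in $W^{\lie g}(\omega_k)$ is exactly the relation $(f_j\tensor 1)^{\langle\omega_k^{\lie a},\alpha_j^\vee\rangle+1}w=0$. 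Thus $w$ satisfies all the defining relations of the local Weyl module $W^{\lie a}(\omega_k^{\lie a})$, and its universal property furnishes a surjection of $\lie a\tensor\bc[t]$-modules $\varphi\colon W^{\lie a}(\omega_k^{\lie a})\twoheadrightarrow M\subseteq W^{\lie g}(\psi)$.

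It remains to prove that $\varphi$ is injective, equivalently that $\dim M\geq\dim W^{\lie a}(\omega_k^{\lie a})$, and this is exactly the point at which local admissibility of $(\lie a,\omega_k)$ is consumed. Here I would use the known description of the fundamental local Weyl module $W^{\lie g}(\omega_k)$ in classical type — for $\lie{sl}_n$ it is the evaluation module $V^{\lie g}(\omega_k)$, while for $\lie{so}_n$ and $\lie{sp}_{2n}$ it is the relevant Kirillov--Reshetikhin module with its known decomposition as a $\lie g$-module — together with the explicit graded character of $W^{\lie a}(\omega_k^{\lie a})$, and compare graded dimensions, using that $M$ is generated in degree $0$ by $w$. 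The combinatorial content of local admissibility should be precisely the inequality that makes this comparison go the right way for the given connected set of nodes defining $\lie a$ and the chosen node $k$, and I expect the verification to proceed case by case along the classification of simple Levi subalgebras, producing enough explicit low-weight vectors in $M$ from the standard matrix realizations. This injectivity step is the only genuine obstacle; Steps 1 and 2 are purely formal.
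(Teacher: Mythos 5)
Your overall strategy is the paper's own: reduce to $\psi$ supported at a single point and translate it to the origin via the pullback $t\mapsto t-z_0$, set $M:=U(\lie a\otimes\bc[t]).w$, obtain the surjection $W^{\lie a}(\pi\circ\psi)\twoheadrightarrow M$ from the universal property of local Weyl modules, and then prove injectivity by a dimension comparison that uses the known $\lie g$-decomposition of the fundamental local Weyl module. However, as written the proposal has a concrete error and a genuine gap. The error is your claim that $\pi(\omega_k)$ is either $0$ or ``the fundamental weight of $\lie a$ attached to the node $k$.'' That is only true when $\Pi_{\lie a}\subseteq\Pi$, i.e.\ when $\lie a$ is generated by simple root vectors of $\lie g$. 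In Theorem~\ref{main-thm1} the Levi subalgebra comes from an arbitrary closed symmetric subset $R'\subseteq R$, so a simple root of $\lie a$ can be, say, $\epsilon_i+\epsilon_j$ in type $B_n$, and then $\pi(\omega_k)$ can equal $2\tau_s$, $\tau_{s-1}+\tau_s$, or $\tau_p+\tau_q$. The situation you describe (image always $0$ or fundamental) is precisely the \emph{globally} admissible case; the content of local admissibility is exactly to sort out which of the non-fundamental images still work. The cases where $\pi(\omega_k)$ is fundamental are the ones the paper calls ``simple cases'' and dismisses in one line, because there $W^{\lie a}(\pi(\omega_k))$ is already irreducible as an $\lie a$-module and the two surjections $W^{\lie a}(\pi(\psi))\twoheadrightarrow M\twoheadrightarrow V^{\lie a}(\pi(\psi))$ squeeze everything.

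The gap is that the injectivity step --- which you correctly identify as the only genuine obstacle, and which constitutes essentially the entire proof in the paper --- is announced rather than carried out. The paper's argument runs case by case over the classification of $\Pi_{\lie a}$ (Appendix): for each shape it computes $\pi(\omega_k)$, uses the tensor product property together with Proposition~\ref{classical-decom} to write down the expected $\lie a$-decomposition of $W^{\lie a}(\pi(\omega_k))$ (e.g.\ $V^{\lie a}(\tau_s)\otimes V^{\lie a}(\tau_{s-1})\cong V^{\lie a}(\tau_s+\tau_{s-1})\oplus V^{\lie a}(\tau_{s-3})\oplus\cdots$), and then exhibits nonzero vectors such as $(x^-_{\epsilon_{i}+\epsilon_{j}}\otimes t).w$ inside $M$ which, for weight reasons, must be $\lie a$-highest weight vectors generating each missing summand. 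Until you actually run this analysis --- and in doing so see where it breaks for the locally non-admissible pairs, e.g.\ $\pi(\omega_k)=2\tau_s$ with $\lie a$ of type $B_s$ inside type $B_n$, where $M$ turns out to be a Demazure module strictly smaller than the Weyl module --- you have only the formal half of the theorem, and you have not yet used the hypothesis of local admissibility anywhere.
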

In the end we are mainly interested in branching rules for Weyl modules. A first formula for the branching of $W^{\lie g}(\psi)$ was given in \cite{CL06} for the special case $\lie{sl}_{n} \subset \lie{sl}_{n+1}$, the subalgebra generated by the first $n-1$ simple root vectors. We will generalize this to semi-simple Levi subalgebras $\lie a$ generated by root vectors of simple roots (of $\lie g$), a special class of regular subalgebras (\cite{Dyn52}). The second main result is the following:
\begin{thm}\label{main-thm2} Let $\lambda$ be a dominant integral weight for $\lie g$ such that $(\lie a, \lambda)$ is locally admissible.
\begin{enumerate}
\item[(i)] If $\psi$ is of $\lie g$-weight $\lambda$, then $W^{\lie a}(\pi \circ\psi) \hookrightarrow W^{\lie g}(\psi)$.
\item[(ii)] Further, if $(\lie a, \lambda)$ is globally admissible, then $W^{\lie a}(\pi(\lambda)) \hookrightarrow W^{\lie g}(\lambda)$.
\end{enumerate}
In particular, the highest weight component of the restricted module is again a local/global Weyl module.
\end{thm}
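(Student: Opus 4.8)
The plan is to bootstrap both statements from \thmref{main-thm1}, the fundamental--weight case, using the fusion product realisation of local Weyl modules, and then to lift the local statement to the global one by means of the symmetric algebra $\ba_\lambda$. Throughout, for a cyclic highest weight module I write $w$ for a fixed highest weight generator and, for $z\in\bc$, $\ev_z$ for the evaluation/shift. First I would reduce statement (i) to the case that $\psi$ is supported at a single point: writing $\supp\psi=\{a_1,\dots,a_r\}$ and $\psi=\psi_1\cdots\psi_r$ with $\psi_j$ supported at $a_j$, the factorisation of local Weyl modules over disjoint supports gives $W^{\lie g}(\psi)\cong\bigotimes_j W^{\lie g}(\psi_j)$, and since $\supp(\pi\circ\psi)\subseteq\supp\psi$ with $(\pi\circ\psi)(a_j)=\pi(\psi(a_j))$ the same holds for $\lie a$, while $(\lie a,\lambda)$ locally admissible is inherited by each $(\lie a,\psi(a_j))$. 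A tensor product of highest--weight--preserving injections is again one of that kind (tensoring injective maps over $\bc$), and on the disjoint--support factorisations it is exactly an embedding $W^{\lie a}(\pi\circ\psi)\hookrightarrow W^{\lie g}(\psi)$; so after applying $\ev_{a_j}$ it suffices to prove, for a dominant integral weight $\mu$ at the origin with $(\lie a,\mu)$ locally admissible, that the $\lie a\tensor\bc[t]$--submodule of $W^{\lie g}(\mu)$ through its highest weight vector is $W^{\lie a}(\pi(\mu))$.

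\noindent
For this one--point case I would use the realisation of $W^{\lie g}(\mu)$, $\mu=\sum m_k\omega_k$, as the associated graded of $T:=\bigotimes_{k,i}W^{\lie g}(\omega_k)^{\ev_{z_{k,i}}}$ ($1\le i\le m_k$, the $z_{k,i}$ pairwise distinct) for the fusion filtration --- i.e.\ the identification of $W^{\lie g}(\mu)$ with the fusion product of the fundamental local Weyl modules, available for $\lie g$ of classical type. Inside $T$, the $\lie a\tensor\bc[t]$--submodule generated by $v:=\bigotimes w_{\omega_k}$ is identified using \thmref{main-thm1}: on each factor $\langle w_{\omega_k}\rangle_{\lie a\tensor\bc[t]}^{\ev_{z_{k,i}}}\cong W^{\lie a}(\pi(\omega_k))^{\ev_{z_{k,i}}}$, where for semisimple $\lie a=\bigoplus_l\lie a_l$ one first passes to the simple ideal whose nodes contain $\alpha_k$, using the lemma that a highest weight vector of $\lie a_l$--weight zero is annihilated by $\lie a_l\tensor\bc[t]$ (a short computation with $\lie{sl}_2$--triples and the Weyl relations); and then, the $z_{k,i}$ being distinct, the disjoint--support factorisation for $\lie a$ shows $\langle v\rangle_{\lie a\tensor\bc[t]}=\bigotimes_{k,i}W^{\lie a}(\pi(\omega_k))^{\ev_{z_{k,i}}}$, which is cyclic on $v$ and equals $W^{\lie a}(\psi'')$ for an $\lie a$--current weight $\psi''$ of $\lie a$--weight $\pi(\mu)$. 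Since $\lie a\tensor\bc[t]$ is a graded subalgebra of $\lie g\tensor\bc[t]$, the fusion filtration of $T$ restricts to that of $\langle v\rangle_{\lie a\tensor\bc[t]}$, so passing to $\gr$ identifies $\langle w_\mu\rangle_{\lie a\tensor\bc[t]}\subseteq W^{\lie g}(\mu)$ with the fusion product of the $W^{\lie a}(\pi(\omega_k))$, hence with $W^{\lie a}(\pi(\mu))$ by the classical--type fusion theorem for $\lie a$. This proves (i), the embedded copy being by construction the highest weight component of the restriction.

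\noindent
For statement (ii) the highest weight vector $w_\lambda$ of the global Weyl module $W^{\lie g}(\lambda)$ still satisfies the defining relations of $W^{\lie a}(\pi(\lambda))$, so its universal property gives a surjection $W^{\lie a}(\pi(\lambda))\twoheadrightarrow\langle w_\lambda\rangle_{\lie a\tensor\bc[t]}\subseteq W^{\lie g}(\lambda)$, compatible with the algebra map $\ba_{\pi(\lambda)}^{\lie a}\to\ba_\lambda^{\lie g}$ induced by $\pi$. To see it is injective I would argue fibrewise: global Weyl modules are torsion free over their symmetric algebras, so injectivity may be checked after localising at the generic point of $\ba_{\pi(\lambda)}^{\lie a}$, and at a generic maximal ideal the fibre of $W^{\lie g}(\lambda)$ is a local Weyl module $W^{\lie g}(\psi)$ with $\psi$ of $\lie g$--weight $\lambda$ and pairwise distinct support, the fibre of $W^{\lie a}(\pi(\lambda))$ at the corresponding maximal ideal of $\ba_{\pi(\lambda)}^{\lie a}$ is $W^{\lie a}(\pi\circ\psi)$, and the induced map of fibres is the embedding from (i). The assumption that $(\lie a,\lambda)$ be \emph{globally} admissible --- which in particular implies locally admissible --- is exactly what makes the map $\ba_{\pi(\lambda)}^{\lie a}\to\ba_\lambda^{\lie g}$ well behaved enough for this comparison, so that generic maximal ideals downstairs arise as restrictions of generic maximal ideals upstairs with the expected local Weyl modules as fibres; for a merely locally admissible pair this matching of symmetric algebras can break down even though (i) still applies pointwise. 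As before, the embedded copy of $W^{\lie a}(\pi(\lambda))$ is the highest weight component of the restriction.

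\noindent
The genuinely delicate step I expect to be the one--point case of (i): both the identification of $W^{\lie g}(\mu)$ and of $W^{\lie a}(\pi(\mu))$ with fusion products of fundamental local Weyl modules (which in classical type has to be quoted with care, type by type) and --- especially --- the compatibility of the fusion filtration with restriction to the graded subalgebra $\lie a\tensor\bc[t]$, i.e.\ the identity $\gr\langle v\rangle_{\lie a\tensor\bc[t]}=\langle\bar v\rangle_{\lie a\tensor\bc[t]}$ inside $\gr T$. For (ii) the analogous difficulty is the precise comparison of $\ba_{\pi(\lambda)}^{\lie a}$ with $\ba_\lambda^{\lie g}$, together with torsion--freeness of the global Weyl modules over them, and this is precisely where the stronger, global, admissibility hypothesis is needed.
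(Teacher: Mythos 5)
Your overall strategy --- reduce (i) to a single point, then to fundamental weights via \thmref{main-thm1}, and lift to (ii) by a fibrewise argument over $\ba^{\lie a}_{\pi(\lambda)}$ --- is the same as the paper's, but the step you yourself flag as delicate in (i) is a genuine gap, and the inequality it would need goes the wrong way. If $T=\bigotimes W^{\lie g}(\omega_k)^{\ev_{z_{k,i}}}$ carries the fusion filtration $T^r=\mathcal{F}^r.v$, then for $u$ in the degree-$\le r$ part of $U(\lie a\otimes\bc[t])$ one only knows $u.v\in T^r$, and $u.v$ may fall into $T^{r-1}$ without being reachable from $v$ by lower-degree elements of the \emph{subalgebra}; general filtration considerations therefore give only $\dim U(\lie a\otimes\bc[t]).\bar v\le\dim U(\lie a\otimes\bc[t]).v$, which is the bound you already have from the universal property of $W^{\lie a}(\pi(\mu))$ and is useless for the required lower bound. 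The paper closes exactly this gap with a small weight-theoretic lemma (Proposition~\ref{prop-cruc}): for any finite-dimensional cyclic module $W=U(\lie n^-\otimes\bc[t]).w$ with $w$ of weight $\lambda$, and $\Pi_{\lie a}\subset\Pi$, one has $U(\lie a\otimes\bc[t]).w=\sum_{\mu\in Q^+_{\lie a}}W_{\lambda-\mu}$. Since this characterises the $\lie a\otimes\bc[t]$-submodule purely in terms of $\lie g$-weight spaces, and the $\lie g$-character is the same for $W^{\lie g}(\lambda_0)$ and for the tensor product of fundamental local Weyl modules at distinct points (Lemma~\ref{pointless}), the dimension of the highest-weight component is support-independent and no compatibility of the fusion filtration with the subalgebra is needed. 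You must either prove your identity $\gr\langle v\rangle_{\lie a\otimes\bc[t]}=\langle\bar v\rangle_{\lie a\otimes\bc[t]}$ (which in substance amounts to proving Proposition~\ref{prop-cruc}) or replace that step by the weight-space argument; note also that this lemma, and hence the theorem, uses the standing hypothesis $\Pi_{\lie a}\subset\Pi$, which you never invoke.

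For (ii) your generic-point idea can be made to work, but two points need repair. First, the fibre of $W^{\lie g}(\lambda)$ over a maximal ideal $\bm_\phi$ of $\ba^{\lie a}_{\pi(\lambda)}$ is \emph{not} a local Weyl module: $\ba^{\lie g}_\lambda/\bm_\phi\ba^{\lie g}_\lambda$ is in general positive-dimensional. What one must specialise is the submodule $U(\lie a\otimes\bc[t]).w_\lambda$, and to compute its fibre one first lifts $\bm_\phi$ to a maximal ideal $\bm_\psi$ of $\ba^{\lie g}_\lambda$ with $\pi(\psi)=\phi$ and then applies part (i) inside $W^{\lie g}(\psi)$. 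Second, the existence of such a lift for every (or at least a dense set of) $\phi\in\mathcal{E}^{\pi(\lambda)}_{\lie a}$ is precisely the surjectivity of $\pi:\mathcal{E}^\lambda\to\mathcal{E}^{\pi(\lambda)}_{\lie a}$, which the paper shows to be \emph{equivalent} to global admissibility (Proposition~\ref{surjective-E}); this is the concrete content behind your remark that global admissibility makes the comparison of the symmetric algebras ``well behaved'', and it also relies on the injectivity of $\ba^{\lie a}_{\pi(\lambda)}\to\ba^{\lie g}_\lambda$ (Proposition~\ref{injective-A}) so that the two module structures are compatible. With these repairs your argument coincides in substance with the paper's, which checks the fibre dimension at \emph{every} maximal ideal and then quotes the freeness criterion from the appendix of \cite{FMS12} rather than localising at the generic point.
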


Furthermore the considerations of the fundamental weights imply that the statements of the theorem (subject to the corresponding assumptions) might be read as if and only if. That is to say, if the highest weight component of the restricted local (global) Weyl module is again a local (global) Weyl module for $\lie a \otimes \bc[t]$, then the pair is locally (globally) admissible. The assumption that $\lie a$ is generated by root vectors of simple roots seems to be quite restrictive and we conjecture that the statements of Theorem~\ref{main-thm2} are true even if we drop this assumption.\\
Part(i) of the theorem will be proven by reducing this problem to fundamental weights only, in which case it can be deduced from Theorem~\ref{main-thm1}. For the proof of part (ii) we will use $\ba_{\pi(\lambda)}^{\lie a}$ as a natural subalgebra in $\ba_{\lambda}^{\lie g}$. Then we will show that the restricted module is a free $\ba_{\pi(\lambda)}^{\lie a}-$module of the same rank as the global Weyl module $W^{\lie a}(\pi(\lambda))$.\\
\noindent
We conclude with some final remarks: it is straightforward to ask about the restriction of Weyl modules in the case of generalized current algebras (\cite{CFK10}). The main difficulty one faces here is that in this context the $\lie g$-decomposition of local Weyl modules is not known in the required generality. We will see that the proofs of the main theorems heavily depend of this information. One may use the characterization of Weyl module via homological properties, saying that certain extension groups are trivial (for details we refer to \cite{CFK10}) .\\
\noindent
The paper is organized as follows: in Section~\ref{section-1} we will fix basic notations and introduce the properties locally and globally admissible. In Section~\ref{section-2} we will consider the restriction of finite-dimensional simple module of $\lie g \otimes \bc[t]$. In Section~\ref{section-3} we will briefly review necessary results on local and global Weyl modules. The main theorem will be proven in Section~\ref{section-4} and Section~\ref{section-5}.


\section{Preliminaries}\label{section-1}
\subsection{}\label{defn-sub}
Let $\lie g$ be a simple, finite-dimensional complex Lie algebra of classical type. We fix a triangular decomposition 
\[
\lie g = \lie n^+ \oplus \lie h \oplus \lie n^-.
\]
We denote the set of (positive) roots of $\lie g$ by $R$ (resp. $R^+$). We denote $Q$ the $\bz$-lattice spanned by $R$ and $Q^+$ the $\bz_{\geq 0}$-lattice spanned by $R^+$. The simple roots of $\lie g$ are denoted by $\alpha_i$, $i \in \{1, \ldots,n \} =: I$, the set of simple roots by $\Pi$. For $\alpha \in R$, we denote the corresponding $\lie{sl}_2$-triple by $x_{\alpha}^{\pm}, h_{\alpha}$.\\
The fundamental weights are denoted by $\omega_i, i \in I$ and the set (dominant) integral weights is denoted by $P$ (resp. $P^+$).
We have 
\[ \lie g = \lie h \oplus \bigoplus_{\alpha \in R}  \lie g_{\alpha}.
\]

\noindent
For a subset $R' \subseteq R$, closed under addition and multiplication by $-1$, we denote $\lie a$ the Lie algebra generated by the root spaces corresponding to $R'$:
\[
\lie a :=  \sum_{\alpha \in R'} [\lie g_{\alpha}, \lie g_{-\alpha}] \bigoplus_{ \alpha \in R'} \lie g_{\alpha}.
\]
Then $\lie a$ is the Levi subalgebra of the reductive subalgebra $\lie a + \lie h$ and $\lie a$ is semi-simple by construction. Since there are various definitions of Levi subalgebras in the literature, we define here, that throughout this paper, a Levi subalgebra is a subalgebra of the form as $\lie a$. 
\begin{exam} Let $\lie g \cong \lie{sl}_4$, and $R' = \{ \alpha_1,  \alpha_{2} + \alpha_3, \alpha_{1} + \alpha_2 + \alpha_3,  -\alpha_1,  -\alpha_{2} - \alpha_3,-\alpha_{1} - \alpha_2 - \alpha_3 \}$. Then $\lie a \cong \lie{sl}_3$.
\end{exam} 
We have an induced triangular decomposition
 \[
\lie a = \lie n^+_{\lie a} \oplus \lie h_{\lie a} \oplus \lie n^-_{\lie a}
\]
where $\lie n^{\pm}_{\lie a} \subseteq \lie n^{\pm}$ and $\lie h_{\lie a} \subseteq \lie h$. The simple roots of $\lie a$ are denoted by $\beta_j$, $j \in \{ 1, \ldots, s \} =: J$, and the set of simple roots is denoted by $\Pi_{\lie a}$, the set of roots $R_{\lie a}$ (resp $R^{+}_{\lie a}$). Again, we denote $Q_{\lie a}$ the $\bz$-lattice spanned by $R_{\lie a}$ and $Q^+_{\lie a}$ the $\bz_{\geq 0}$-lattice spanned by $R^+_{\lie a}$. (for more details see Section~\ref{simple-sub}).\\
The fundamental weights of $\lie a$ are denoted by $\tau_j$, $j \in J$ and the set of  (dominant) integral weights is denoted by $P_{\lie a}( P^+_{\lie a})$.\\
\noindent
Since $\lie h_{\lie a} \subseteq \lie h$ and $R^+_{\lie a} \subset R^+$ we have induced maps $\pi: \lie h^* \twoheadrightarrow \lie h_{\lie a}^*$,  $\pi: P \longrightarrow P_{\lie a}$ and $\pi: P^+ \longrightarrow P^+_{\lie a}$.

\subsection{}
We recall some notations and facts from representation theory. Let $V$ be a finite-dimensional $\lie g$-module. Then $V$ decomposes into its weight spaces with respect to the $\lie h$-action
\[
V = \bigoplus_{ \tau \in P } V_\tau,
\]
where $V_\tau = \{ v \in V \; | \; h.v = \tau(h).v \text{ for all } h \in \lie h\}$.

\noindent
The simple finite-dimensional modules are parameterized by dominant integral weights:
\[
V(\lambda)   \leftrightarrow \lambda \in P^+.
\]

\noindent
Let $\lie a \subseteq \lie g$ be a Lie subalgebra as defined in Section~\ref{defn-sub}. Then the simple finite-dimensional $\lie a$-modules are parameterized by $P^+_{\lie a}$, and we denote for $\mu \in P^+_{\lie a}$ the corresponding simple $\lie a$-module by $V^{\lie a}(\mu)$.\\ 
Let $\lambda \in P^+$. Since $V(\lambda)_\lambda$ is one-dimensional and the category of finite-dimensional $\lie a$-modules is semi-simple, we have
\[
U(\lie a).v_\lambda \cong_{\lie a} V^{\lie a}(\pi(\lambda)).
\]

\subsection{}
We introduce the properties locally and globally admissible for a pair $(\lie a, \lambda)$.
\begin{defn}\label{adm-defn}
Let $\lie a$ be simple, then call a pair $(\lie a, \omega_k)$, $1 \leq k \leq n$ \textit{globally admissible} if 
\[
\pi(\omega_k) = \begin{cases}  0 \\  \tau_j, \text{ for some fundamental weight } \tau_j \text{ of }  \lie a \end{cases}.
\]
A pair $(\lie a, \omega_k)$ is called \textit{locally non-admissible} if 
\begin{enumerate}
\item either $\lie a$  is of type $B_s$, $s > 1$, $\epsilon_i + \epsilon_j$ is the unique simple short root of $B_s$ and $i \leq k \leq n-1$
\item or $\lie g$ is of type $C_n$, $\lie a$ is of type $A_s$ and $\pi(\omega_k) \notin \{0 ,\tau_1, \ldots, \tau_s \}$.
\end{enumerate}
In all other cases $(\lie a, \omega_k)$ is called \textit{locally admissible}.

Let $\lie a$ be a semi-simple Levi subalgebra and $\lambda = \omega_{i_1} + \ldots + \omega_{i_j} \in P^+$. Then we call $(\lie a, \lambda)$ locally (globally) admissible if $(\lie a_i, \omega_{i_j})$ is locally (globally) admissible for all $j$ and all simple constituents $\lie a_i$ of $\lie a$.
\end{defn}

We can immediately relate these two properties:

\begin{prop}\label{global-local}
If $(\lie a, \lambda)$ is globally admissible, then $(\lie a, \lambda)$ is locally admissible.
\end{prop}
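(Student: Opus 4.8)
The plan is to reduce everything to the case where $\lie a$ is simple and $\lambda = \omega_k$ is a fundamental weight, since the definitions of locally and globally admissible for general $\lie a$ and $\lambda$ are declared componentwise; thus it suffices to prove the implication for a simple Levi subalgebra $\lie a$ and a single fundamental weight $\omega_k$. So assume $(\lie a, \omega_k)$ is globally admissible, meaning $\pi(\omega_k) = 0$ or $\pi(\omega_k) = \tau_j$ for some fundamental weight $\tau_j$ of $\lie a$, and suppose for contradiction that $(\lie a, \omega_k)$ is locally non-admissible, so one of the two alternatives in \defref{adm-defn} holds.

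First I would dispose of alternative (2): here $\lie g$ is of type $C_n$, $\lie a$ is of type $A_s$, and the hypothesis is precisely that $\pi(\omega_k) \notin \{0, \tau_1, \dots, \tau_s\}$. But global admissibility asserts exactly that $\pi(\omega_k) \in \{0, \tau_1, \dots, \tau_s\}$ (the fundamental weights of $\lie a$ being $\tau_1, \dots, \tau_s$). These are contradictory, so alternative (2) cannot occur when $(\lie a,\omega_k)$ is globally admissible. This case is therefore immediate and essentially formal.

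The substantive case is alternative (1): $\lie a$ is of type $B_s$ with $s > 1$, the unique simple short root of $\lie a$ is $\epsilon_i + \epsilon_j$ (expressed in the standard $\epsilon$-coordinates of the ambient $\lie g$), and $i \le k \le n-1$. The key step is to compute $\pi(\omega_k)$ explicitly in these coordinates and check that it is neither $0$ nor a fundamental weight of $\lie a$, contradicting global admissibility. Concretely, one writes $\omega_k = \epsilon_1 + \dots + \epsilon_k$ (the fundamental weight of $\lie g$ in type $A$, $B$, or $D$; in type $C$ this case doesn't arise since there $\lie a$ of type $B_s$ is handled differently — I would double-check the relevant ambient types), restricts to $\lie h_{\lie a}^*$ via $\pi$, and pairs the result against the simple coroots $\beta_j^\vee$ of $\lie a$. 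The condition $i \le k \le n-1$ is designed to force the coefficients of $\pi(\omega_k)$ in the $\tau_j$-basis to be a pattern with a $1$ on the short-root node but not matching any single fundamental weight (e.g.\ something like $\tau_{j-1} + \tau_j$ or $2\tau_j$ on the short node), while $k$ being in this range rules out the degenerate endpoints where $\pi(\omega_k)$ would legitimately equal $0$ or a single $\tau_j$.

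The main obstacle will be organizing the case analysis cleanly: the short root $\epsilon_i + \epsilon_j$ of a $B_s$-Levi can sit inside various ambient classical $\lie g$ in several configurations, and one must track how the simple roots of $\lie a$ decompose in terms of the $\epsilon$'s and how $\pi$ acts on each $\omega_k$. I would handle this by fixing, once and for all, an explicit model for the embedding of a $B_s$-Levi (using the classification of regular subalgebras, \cite{Dyn52}), listing the simple roots $\beta_1, \dots, \beta_s$ of $\lie a$ in $\epsilon$-coordinates with $\beta_s = \epsilon_i + \epsilon_j$ (or wherever the short root lands), and then a direct pairing computation shows $\langle \pi(\omega_k), \beta_\ell^\vee \rangle$ takes a value incompatible with $\pi(\omega_k) \in \{0\} \cup \{\tau_1,\dots,\tau_s\}$ exactly when $i \le k \le n-1$. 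Once that single computation is in place the proposition follows.
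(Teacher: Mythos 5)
Your proposal is correct and follows exactly the reasoning the paper intends: the paper states the proposition without proof (``we can immediately relate these two properties''), since alternative (2) of local non-admissibility directly negates global admissibility, and in alternative (1) the computation $\pi(\omega_k)=2\tau_s$ for $i\le k\le n-1$ (which the paper carries out later, in case (1)(a) of the proof of Lemma~\ref{funda-res}) shows $\pi(\omega_k)$ is neither $0$ nor a fundamental weight. Your only hedge --- which ambient types admit a $B_s$ Levi --- resolves in your favour: by the Appendix this happens only for $\lie g$ of type $B_n$, with short simple root $\varepsilon_i$ (the ``$\epsilon_i+\epsilon_j$'' in Definition~\ref{adm-defn} is a slip of the paper, not of your argument), and the pairing computation you describe then gives $\pi(\omega_k)=2\tau_s$ as claimed.
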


\subsection{} The aim of the paper is to study certain modules of the current algebra $\lie g \otimes \bc[t]$, the Lie algebra with the bracket induced by
\[
[ x \otimes p, y \otimes q ] = [x,y]_{\lie g} \otimes pq
\]
for $x,y \in \lie g, p,q \in \bc[t]$. 

\noindent
For every $c \in \bc$,  we have an induced surjective map of Lie algebras
\[
\lie g \otimes\bc[t] \twoheadrightarrow \lie g \otimes \bc \cong \lie g\; : \; x \otimes p \mapsto x \otimes p(c).
\]
Given a $\lie g$-module $V$, we denote the $\lie g \otimes \bc[t]$-module obtained by this pullback by $\ev_{c} V$ or $V_c$ for short.


\section{Simple finite-dimensional modules and subalgebras}\label{section-2}
We consider here simple finite-dimensional modules for $\lie g \otimes \bc[t]$. Let $\mathcal{E}$  denote the monoid
\[
\mathcal{E} = \{ \psi: \bc \longrightarrow P^+ \, | \, |\supp(\psi)| < \infty \}.
\]
We set $\wt(\psi) = \sum_{a \in \supp(\psi)} \psi(a) \in P^+$ and for $\lambda \in P^+$:
\[
\mathcal{E}^\lambda = \{ \psi \in \mathcal{E} \, | \, \wt(\psi) = \lambda \}.
\]

\noindent
To each $\psi \in \mathcal{E}$ one can associate a finite-dimensional $\lie g \otimes \bc[t]$-module
\[
V(\psi) = \bigotimes_{a \in \supp(\psi)} \ev_{a} V(\psi(a)).
\]
Let $v_a$ be a generator of $V(\psi(a))_{\psi(a)}$, then $v_\psi = \otimes_{a \in \supp(\psi)} v_a$ is a cyclic generator of $V(\psi)$. It was shown in \cite{Rao93},\cite{Lau10}, \cite{NSS12}, \cite{CFK10}, that $\mathcal{E}$ parameterizes the simple finite-dimensional $\lie g \otimes \bc[t]$-modules up to isomorphism.

\noindent
Similarly we can define $\mathcal{E}_{\lie a}$, $\mathcal{E}_{\lie a}^{\mu}$ for $\mu \in P^+_{\lie a}$. The map $\pi: P^+ \longrightarrow P^+_{\lie a}$ induces maps
\[
\pi: \mathcal{E} \longrightarrow \mathcal{E}_{\lie a} \; , \; \pi: \mathcal{E}^{\lambda} \longrightarrow\mathcal{E}_{\lie a}^{\pi(\lambda)} \; , \; \pi(\psi) := \pi \circ \psi.
\]
For $\phi \in \mathcal{E}_{\lie a}$ let $V^{\lie a}(\phi)$ denote the corresponding simple module.

\begin{lem}\label{res-simple}
Let $\psi \in \mathcal{E}$, then 
\[
U(\lie a \otimes A).v_\psi \cong V^{\lie a}(\pi(\psi)).
\]
\end{lem}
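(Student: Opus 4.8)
The plan is to reduce the statement to the classical ($\mathbb{C}$, i.e.\ non-current) case, which is already recorded in the excerpt, by decomposing everything according to the support of $\psi$. First I would observe that $v_\psi = \otimes_{a \in \supp(\psi)} v_a$ is a highest-weight vector for $\lie a \otimes \bc[t]$ in the sense that $\lie n^+_{\lie a} \otimes \bc[t]$ annihilates it: indeed $\lie n^+_{\lie a} \subseteq \lie n^+$, so each $x^+_\beta \otimes t^r$ with $\beta \in R^+_{\lie a}$ kills every tensor factor $v_a$, hence kills $v_\psi$ by the Leibniz rule for the action on a tensor product. Likewise $h \otimes 1$ acts on $v_\psi$ by $\sum_a \psi(a)(h) = \wt(\psi)(h)$ for $h \in \lie h_{\lie a}$, and $h \otimes t^r$ for $r > 0$ acts by $0$; more precisely the $\lie h_{\lie a} \otimes \bc[t]$-action on $v_\psi$ is exactly the one prescribed by the $\lie a$-current highest weight $\pi(\psi) = \pi \circ \psi$. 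So the submodule $U(\lie a \otimes \bc[t]).v_\psi$ is a quotient of the local Weyl-type universal highest weight module, and in particular every simple subquotient through $v_\psi$ is $V^{\lie a}(\pi(\psi))$; what remains is to show $U(\lie a \otimes \bc[t]).v_\psi$ is already simple, equivalently that it equals $V^{\lie a}(\pi(\psi))$ as an $\lie a \otimes \bc[t]$-module.

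The key step is to identify $U(\lie a \otimes \bc[t]).v_\psi$ inside the tensor product. I would argue that
\[
U(\lie a \otimes \bc[t]).v_\psi = \bigotimes_{a \in \supp(\psi)} \ev_a\bigl( U(\lie a).v_a \bigr),
\]
and then invoke the classical fact recalled just before Lemma~\ref{res-simple}, namely $U(\lie a).v_a \cong_{\lie a} V^{\lie a}(\pi(\psi(a)))$, together with the fact that for distinct points $a$ the evaluation modules $\ev_a V^{\lie a}(\mu_a)$ have, as $\lie a \otimes \bc[t]$-modules, tensor product again simple with current highest weight the function $a \mapsto \mu_a$ — exactly the statement that $\mathcal{E}_{\lie a}$ parameterizes the simple finite-dimensional $\lie a \otimes \bc[t]$-modules, applied to the subalgebra $\lie a$ in place of $\lie g$. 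The inclusion $\supseteq$ is clear once one knows the right-hand side is $\lie a \otimes \bc[t]$-stable and contains $v_\psi$; the reverse inclusion $\subseteq$ follows because the right-hand side is itself an $\lie a \otimes \bc[t]$-submodule of $V(\psi)$ containing $v_\psi$, so it contains $U(\lie a \otimes \bc[t]).v_\psi$, and then equality of dimensions (or of current highest weights of the respective simple socles through $v_\psi$) forces them to coincide.

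The main obstacle is the stability claim for the product $\bigotimes_a \ev_a(U(\lie a).v_a)$: a priori $\lie a \otimes \bc[t]$ acts on the full tensor product $V(\psi) = \bigotimes_a \ev_a V(\psi(a))$ via the coproduct, and I must check this action preserves the subspace $\bigotimes_a \ev_a(U(\lie a).v_a)$. For $x \otimes t^r \in \lie a \otimes \bc[t]$ the action on the $a$-th tensor slot is by $x \otimes a^r$, i.e.\ by $a^r x$ acting via the $\lie a$-action on $V(\psi(a))$; since $U(\lie a).v_a$ is an $\lie a$-submodule of $V(\psi(a))$, it is preserved by every scalar multiple of every element of $\lie a$, hence the coproduct action of $\lie a \otimes \bc[t]$ preserves the tensor product of these submodules. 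Granting this, the identification above is immediate and the lemma follows; the only slightly delicate point is to phrase the appeal to the classification of simple $\lie a \otimes \bc[t]$-modules so that it applies verbatim with the Levi $\lie a$ (a genuine simple Lie algebra in its own right once we fix its root datum from Section~\ref{defn-sub}) in the role of $\lie g$, which is exactly what the cited references \cite{Rao93},\cite{CFK10} allow.
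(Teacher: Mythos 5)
Your proposal is correct, and it follows the same overall strategy as the paper: decompose according to $\supp(\psi)$, reduce to the classical fact $U(\lie a).v_a \cong V^{\lie a}(\pi(\psi(a)))$, and invoke the classification of simple finite-dimensional $\lie a \otimes \bc[t]$-modules as tensor products of evaluations at distinct points. The one place where your mechanism differs is in how the key equality $U(\lie a \otimes \bc[t]).v_\psi = \bigotimes_a \ev_a\bigl(U(\lie a).v_a\bigr)$ is obtained: the paper observes that $\lie g \otimes \prod_i(t-a_i)\bc[t]$ annihilates $V(\psi)$, so everything factors through $\bigoplus_i \lie g \otimes \bc[t]/(t-a_i)$ by the Chinese Remainder Theorem, and then the cyclic module over a direct sum of Lie algebras acting on a tensor product of cyclic modules is the tensor product of the cyclic pieces; you instead prove only the inclusion $\subseteq$ directly (via the coproduct/stability observation, which is sound) and then upgrade it to an equality by noting that the right-hand side is already simple as an $\lie a \otimes \bc[t]$-module. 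Both arguments are valid and both ultimately lean on the same classification result from \cite{Rao93}, \cite{CFK10}; the paper's version is slightly more self-contained in that it exhibits the generation explicitly rather than appealing to simplicity of the target, but your route is a legitimate shortcut and nothing in it fails.
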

\begin{proof}
Let $\supp(\psi) = \{a_1, \ldots, a_k\} $. Then by definition of $V(\psi)$:
\[
(\lie g \otimes \prod_{i = 1}^k (t - a_i) \bc[t]).V(\psi) = 0.
\]
This implies that $V(\psi)$ is a simple module for the semi-simple Lie algebra
\[
 \lie g \otimes \left(\bc[t]/(\prod_{i = 1}^k (t - a_i)) \right) \cong \bigoplus_{i = 1}^k \lie g \otimes \left( \bc[t]/(t-a_i)\right).
\] 
We have by definition $V(\psi) = \bigotimes_{i = 1}^k \ev_{a_i} V(\psi(a_i))$ and if $v_i$ is a generator of $V(\psi(a_i))_{\psi(a_i)}$, then 
\[
(U(\lie g \otimes  \bc[t]/ \prod_{i = 1}^k (t-a_i))).\otimes v_i = \bigotimes_{i = 1}^k U(\lie g \otimes \bc[t]/(t-a_i)).v_i \cong  \bigotimes_{i = 1}^k  \ev_{a_i} V(\psi(a_i))
\]
This implies that 
\[
\left(U( \lie a \otimes \bc[t])\right). \otimes v_i = \left(U(\lie a \otimes  \bc[t]/\prod_{i=1}^k (t-a_i) \bc[t])\right). \otimes v_i = \bigotimes_{i=1}^k U(\lie a \otimes \bc[t]/(t-a_i)).v_i .
\]
The lemma follows since
\[
\bigotimes U(\lie a \otimes \bc[t]/(t-a_i)).v_i \cong \bigotimes_{i = 1}^k  \ev_{a_i} V^{\lie a}(\pi(\psi(a_i))) \cong V^{\lie a}(\pi(\psi)).
\]
\end{proof}

We remark here: Let $\lambda \in P^+$. Then $V(\lambda)$ is a finite-dimensional $\lie a$-module (obtained through restriction) and hence it decomposes into a direct sum of simple $\lie a$-modules:
\[
V(\lambda) \cong_{\lie a} \bigoplus_{\tau \in P^+_{\lie a}} V^{\lie a}(\tau)^{\oplus c_{\lambda}^{\tau}}.
\]
The proof above implies that $V(\psi)$ decomposes into a direct sum of simple $\lie a \otimes \bc[t]$-modules. The multiplicities $c_{\lambda}^{\tau}$ would immediately give a branching rule for these simple modules. Unfortunately, they are far from being known in general.

\subsection{}
We will relate \enquote{globally admissible} and the set of finitely supported functions with weight $\lambda$:

\begin{prop}\label{surjective-E}
Let $(\lie a, \lambda)$ be globally admissible and $\phi \in \mathcal{E}_{\lie a}^{\pi(\lambda)}$. Then there exists $\psi \in \mathcal{E}^{\lambda}$ such that
\[
\pi(\psi) = \phi \text{ and } V^{\lie a}(\phi) \cong_{\lie a \otimes \bc[t]} U(\lie a \otimes \bc[t]).v_\psi \; . \; 
\]
In particular $\pi: \mathcal{E}^{\lambda} \longrightarrow \mathcal{E}^{\pi(\lambda)}_{\lie a}$ is surjective. 
\end{prop}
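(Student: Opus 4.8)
The plan is to reduce the proposition to a purely combinatorial surjectivity statement and then to exhibit the required lift by a counting argument. Note first that \lemref{res-simple} already provides, for \emph{every} $\psi \in \mathcal{E}$, an isomorphism $U(\lie a \otimes \bc[t]).v_\psi \cong V^{\lie a}(\pi(\psi))$. Hence the moment we produce some $\psi \in \mathcal{E}^{\lambda}$ with $\pi(\psi) = \phi$, the asserted isomorphism $V^{\lie a}(\phi) \cong_{\lie a \otimes \bc[t]} U(\lie a \otimes \bc[t]).v_\psi$ is automatic, and the ``in particular'' clause follows as well. So everything comes down to showing that $\pi \colon \mathcal{E}^{\lambda} \to \mathcal{E}_{\lie a}^{\pi(\lambda)}$ is onto.

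I would first dispatch the case $\lambda = \omega_k$ as a warm-up. Global admissibility of $(\lie a,\omega_k)$ says $\pi(\omega_k)$ is either $0$ or a single fundamental weight $\tau_j$ of $\lie a$. In the first case $\mathcal{E}_{\lie a}^{0}=\{0\}$ and any $\psi$ with $\psi(a_0)=\omega_k$ at one point $a_0$ works; in the second, an element $\phi \in \mathcal{E}_{\lie a}^{\tau_j}$ is necessarily supported at a single point $a_0$ with $\phi(a_0)=\tau_j$ (its values lie in $P^+_{\lie a}$ and sum to the fundamental weight $\tau_j$), and $\psi(a_0):=\omega_k$ again does the job. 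The point to isolate here is that global admissibility forces each elementary ``building block'' $\omega_k$ of $\lambda$ to have an \emph{atomic} image under $\pi$ (zero, or one fundamental weight).

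For general $\lambda = \omega_{i_1}+\cdots+\omega_{i_m}$ (fundamental weights with multiplicity), by global admissibility each $\pi(\omega_{i_r})$ lies in $\{0\}\cup\{\tau_j : j\in J\}$, so $\pi(\lambda)=\sum_{r=1}^m \pi(\omega_{i_r})$, and comparing coefficients of each $\tau_j$ in a given $\phi\in\mathcal{E}_{\lie a}^{\pi(\lambda)}$, written $\phi(a)=\sum_j c_{a,j}\tau_j$ with $c_{a,j}\in\bz_{\geq 0}$, yields $\sum_a c_{a,j}=\#\{\,r : \pi(\omega_{i_r})=\tau_j\,\}$ for every $j$. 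This equality of totals lets me choose a partition $\{1,\dots,m\}=\bigsqcup_{a\in\bc}S_a$ (finitely many blocks nonempty) in which, for every point $a$ and every $j$, the block $S_a$ contains exactly $c_{a,j}$ of the indices $r$ with $\pi(\omega_{i_r})=\tau_j$, the indices with $\pi(\omega_{i_r})=0$ being dumped into one fixed block (if $\pi(\lambda)=0$ the statement is trivial anyway). Setting $\psi(a):=\sum_{r\in S_a}\omega_{i_r}\in P^+$, one checks $\wt(\psi)=\sum_{r=1}^m\omega_{i_r}=\lambda$ and $\pi(\psi(a))=\sum_{r\in S_a}\pi(\omega_{i_r})=\sum_j c_{a,j}\tau_j=\phi(a)$, so $\psi\in\mathcal{E}^{\lambda}$ and $\pi(\psi)=\phi$, as required.

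The main obstacle, and essentially the only nontrivial content, is precisely the atomicity supplied by global admissibility: it is what permits each individual contribution $\pi(\omega_{i_r})$ to be reassigned freely among the evaluation points in order to match an arbitrary prescribed $\phi$. If some $\pi(\omega_{i_r})$ were instead a proper sum of fundamental weights, its contribution could not in general be split between two distinct points, and surjectivity would genuinely fail; so this is also the reason the admissibility hypothesis cannot be dropped. (For $\lie a$ semisimple one runs the same bookkeeping over the fundamental weights of all simple constituents, the relevant feature again being that each $\pi(\omega_{i_r})$ is a fundamental weight or zero.)
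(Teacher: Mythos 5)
Your proof is correct and follows essentially the same route as the paper's: both reduce the statement to surjectivity of $\pi\colon\mathcal{E}^{\lambda}\to\mathcal{E}_{\lie a}^{\pi(\lambda)}$ via Lemma~\ref{res-simple}, and both exploit the atomicity of the images $\pi(\omega_{i_r})$ guaranteed by global admissibility to lift the pointwise decomposition $\pi(\lambda)=\sum_{a\in\supp(\phi)}\phi(a)$ to a decomposition $\lambda=\sum_a\mu^a$ with $\pi(\mu^a)=\phi(a)$. Your bookkeeping with the coefficients $c_{a,j}$ simply makes explicit the step the paper records as \eqref{eq:prop-s}.
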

\begin{proof}
Let $\lambda = \sum_{i \in I} m_i \omega_i$ and $\pi(\lambda) = \sum_{j \in J} n_j \tau_j$. Suppose we have a decomposition of $\lambda = \mu_1 + \ldots+ \mu_k$, with $\mu_\ell \in P^+$. Then $(\lie a, \lambda)$ is globally admissible if and only if $(\lie a,  \mu_\ell)$ is globally admissible for all $\ell \in \{1, \ldots, k\}$.\\
Since  $(\lie a, \lambda)$ is globally admissible we can find for all $j \in J$, such that $n_j \neq 0$, an $i_j \in I$ with $\pi(\omega_{i_j}) = \tau_j$. Then $i_j \neq i_{j'}$ if $j \neq j'$.\\
This implies that if $\pi(\lambda) = \nu_1 + \ldots + \nu_k$ is a decomposition into dominant integral weights for $\lie a$, then there exist $\mu_1, \ldots, \mu_k$ such that
\begin{eqnarray}
\lambda = \mu_1 + \ldots + \mu_k \; \text{ and } \pi(\mu_\ell) = \nu_\ell \text{ for all } 1 \leq \ell \leq k.
\label{eq:prop-s}
\end{eqnarray}

\noindent
Let $\phi \in \mathcal{E}^{\pi(\lambda)}_{\lie a}$. We have to show that there exists $\psi \in \mathcal{E}^\lambda$ such that $\pi(\psi) = \phi$. We can write $\phi = \sum_{a \in \supp(\phi)} \phi_a$, where
\[
\phi_a (b) := \begin{cases} \phi(a) \text{ if } b = a \\ 0 \text{ else } \end{cases}
\]
It is clearly sufficient to find preimages for all $\phi_a$. By \eqref{eq:prop-s}, for all $a \in \supp(\phi)$ there exists $\mu^a \in P^+$ such that 
\[
\lambda = \sum_{a \in \supp(\phi)} \mu^a \; \text{ and } \pi(\mu^a) = \phi(a).
\]
We define $\psi_a \in \mathcal{E}^{\mu^a}$ by $\psi_a(a) = \mu^a$ and $\psi_a(b) = 0$ for $b \neq a$. Then
\[ \pi(\psi_a) = \phi_a \text{ and } \sum_{a \in \supp(\phi)} \wt(\psi_a) = \lambda.
\]
By setting  \[\psi := \sum_{a \in \supp(\phi)} \psi_a,\] we have $\psi \in \mathcal{E}^{\lambda}$ and $\pi(\psi) = \phi$.
This implies the proposition, since with Lemma~\ref{res-simple}
\[
V^{\lie a}(\phi) \cong_{\lie a \otimes \bc[t]} U(\lie a \otimes \bc[t]).v_\psi.
\]
\end{proof}

\noindent
Let us briefly consider the situation where $(\lie a, \lambda)$ is not globally admissible:
let $\lambda = \sum m_i \omega_i$ and we will assume for the moment that $m_i \neq 0 \Rightarrow \pi(\omega_i) \neq 0$. Further, let $\pi(\lambda) = \sum n_i \tau_i$. Then we have $\sum m_i \leq \sum n_i$.\\
Suppose $(\lie a, \lambda)$ is not globally admissible. Then there exists $i\in I$ with $m_i \neq 0$ and $\pi(\omega_i)$ is not a fundamental weight. This implies in this case $\sum m_i < \sum n_i$.\\
But this also implies that for any $\psi \in \mathcal{E}^\lambda$ we have 
\[|\supp (\psi)| = |\supp (\pi (\psi)) | \leq \sum m_i < \sum n_i.\] 
On the other hand we have the regular functions in $\mathcal{E}_{\lie a}^{\pi(\lambda)}$ (e.g. functions assigning to each point either $0$ or a fundamental weight). For such a regular function $\phi$ we have $|\supp (\phi) | = \sum n_i$. This  implies that there does no exist $\psi \in \mathcal{E}^{\lambda}$ with $\pi(\psi) = \phi$, so  $\pi: \mathcal{E}^{\lambda} \longrightarrow \mathcal{E}_{\lie a}^{\pi(\lambda)}$ is not surjective.\\
It remains to consider the cases where there exists $m_i \neq 0$ and $\pi(\omega_i) = 0$.  Let 
\[ S(\lambda):= \{i \in I \, | \, m_i \neq 0, \pi(\omega_i) = 0 \} \text{ and } \lambda' := \lambda - \sum_{i \in S(\lambda)} m_i \omega_i.\] 
Then we have $\pi(\lambda) = \pi(\lambda')$ and $(\lie a, \lambda)$ is globally admissible if and only if  $(\lie a, \lambda')$ is globally admissible. Moreover $\pi':  \mathcal{E}^{\lambda'} \longrightarrow \mathcal{E}_{\lie a}^{\pi(\lambda')}$ is surjective if and only if  $\pi:  \mathcal{E}^{\lambda} \longrightarrow \mathcal{E}_{\lie a}^{\pi(\lambda)}$ is surjective. Together with the case already considered we conclude for all $\lambda \in P^+$:
\[ (\lie a, \lambda) \text{ is globally admissible } \Leftrightarrow \pi: \mathcal{E}^{\lambda} \longrightarrow \mathcal{E}_{\lie a}^{\pi(\lambda)} \text{ is surjective.}
\] 

\section{Global and local Weyl modules}\label{section-3}
In this section we recall the definitions and some facts on global and local Weyl modules.

\subsection{}
Following \cite{CP01} we define
\begin{defn}\label{global-defn}
Let $\lambda \in P^+$, then the $\lie g \otimes \bc[t]$-module $W^{\lie g}(\lambda)$ generated by a non-zero vector $w_\lambda$ subject to the relations
\[ 
\lie n^+ \otimes \bc[t].w_\lambda = 0 \; , \; (h \otimes 1).w_\lambda = \lambda(h)w_\lambda \; , \; (x_\alpha^- \otimes 1)^{\lambda(h_\alpha) + 1}.w_\lambda = 0
\]
for all $h \in \lie h$ and $\alpha \in R^+$, is called the \textit{global Weyl module} of weight $\lambda$.
\end{defn}

\noindent
A couple of remarks are necessary.
\begin{rem}\label{remark-global} We are citing here mainly research of the last decade:
\begin{enumerate}
\item By definition, $W^{\lie g}(\lambda)$ admits a \textit{universal property}. Any cyclic highest weight $\lie g \otimes \bc[t]$-module of highest weight $\lambda$ is a quotient of $W^{\lie g}(\lambda)$.
\item In the definition, $\bc[t]$ can be replaced by a commutative, associative unital algebra over $\bc$. One may see for example \cite{CFK10}, \cite{FL04} to get an insight into the difficulties and differences to the classical situation that one faces here.
\item Global Weyl modules can also be defined for twisted loop algebras (\cite{FMS11}) or more general for equivariant map algebras. (\cite{FMS12})
\item By construction, $W^{\lie g}(\lambda)$ is an integrable $\lie g$-module but infinite dimensional, even the multiplicity of every simple $\lie g$-module appearing in a decomposition is infinite.
\item $W^{\lie g}(\lambda)$ is projective in the category of integrable $\lie g \otimes \bc[t]$-modules with weights bounded above by $\lambda$ (see \cite{CFK10}).
\item $W^{\lie g}(\lambda)$ plays an important role in the context of q-Toda integrable systems, namely its character gives a so-called q-Whittaker function (see \cite{BF12}).
\end{enumerate}
\end{rem}

\noindent
We denote 
\[
\ba_\lambda^{\lie g} := U(\lie h \otimes \bc[t])/\ann_{U(\lie h \otimes \bc[t])}(w_\lambda).
\]
Then $\ba_\lambda^{\lie g}$ is a commutative, associative, unital algebra. Further (\cite{CP01}, resp \cite{CFK10} for the second part):
\begin{thm}\label{ba-lambda}
If $\lambda = \sum_{i \in I} m_i \omega_i$, then
\[
\ba_\lambda^{\lie g} \cong \bigotimes_{i \in I} S^{m_i}(\bc[t])
\]
where $S^{m_i}(\bc[t])$ denotes the $m_i$-th symmetric algebra of $\bc[t]$. Further
\[
\ba_\lambda^{\lie g}  \cong U(\lie h \otimes \bc[t])/I
\]
where 
\[
I = \bigcap_{\psi \in \mathcal{E}^{\lambda}} \ann_{U(\lie h \otimes \bc[t])} (v_\psi).
\]
\end{thm}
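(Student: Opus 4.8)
The statement to prove is Theorem~\ref{ba-lambda}, about the structure of $\ba_\lambda^{\lie g}$.

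The plan is to prove the two assertions in sequence, reducing everything to the action of $\lie h \otimes \bc[t]$ on the highest weight line.

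First, for the isomorphism $\ba_\lambda^{\lie g} \cong \bigotimes_{i \in I} S^{m_i}(\bc[t])$: the idea is to decouple the simple roots. For each $i \in I$, the element $h_{\alpha_i} \in \lie h$ generates a copy of $\lie{sl}_2 \otimes \bc[t]$ (together with $x_{\alpha_i}^\pm \otimes \bc[t]$) inside $\lie g \otimes \bc[t]$, and the vector $w_\lambda$ generates, under this subalgebra, a quotient of the $\lie{sl}_2$ global Weyl module of weight $m_i$. Since $\{h_{\alpha_i}\}$ is a basis of $\lie h$, we have $U(\lie h \otimes \bc[t]) \cong \bigotimes_{i \in I} U(\bc h_{\alpha_i} \otimes \bc[t])$, and I would argue that the annihilator of $w_\lambda$ is the ``tensor product'' of the individual annihilators, each of which, by the $\lie{sl}_2$-case established in \cite{CP01}, is the ideal cutting out $S^{m_i}(\bc[t])$ from the polynomial ring $U(\bc h_{\alpha_i} \otimes \bc[t])$. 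The key mechanism showing the annihilators decouple is the existence of enough cyclic highest weight modules: given any prescribed behaviour on each $\lie{sl}_2$-factor realized by a suitable evaluation module, one tensors these together to produce a highest weight $\lie g \otimes \bc[t]$-module, hence a quotient of $W^{\lie g}(\lambda)$, which forces the reverse inclusion of ideals. Concretely one uses that $S^{m_i}(\bc[t])$ is (a localization aside) the ring of symmetric functions, whose spectrum is parameterized by unordered $m_i$-tuples of points in $\bc$, matching evaluation parameters.

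Second, for the identification $\ba_\lambda^{\lie g} \cong U(\lie h \otimes \bc[t])/I$ with $I = \bigcap_{\psi \in \mathcal{E}^\lambda} \ann_{U(\lie h \otimes \bc[t])}(v_\psi)$: one inclusion is immediate, since for every $\psi \in \mathcal{E}^\lambda$ the module $V(\psi)$ is a cyclic highest weight module of highest weight $\lambda$ (its highest weight is $\wt(\psi) = \lambda$), hence a quotient of $W^{\lie g}(\lambda)$ by Remark~\ref{remark-global}(i), so $\ann(w_\lambda) \subseteq \ann(v_\psi)$ and therefore $\ann(w_\lambda) \subseteq I$. For the reverse inclusion I would use the first part: $\ba_\lambda^{\lie g}$ is a polynomial ring, in particular reduced, and its maximal spectrum is $\prod_i \operatorname{Sym}^{m_i}(\bc)$ — tuples of evaluation points. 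Each such point corresponds to some $\psi \in \mathcal{E}^\lambda$ (spread the $\omega_i$'s with multiplicity $m_i$ over the chosen points, which is exactly the kind of bookkeeping in the proof of Proposition~\ref{surjective-E}), and $\ann_{U(\lie h \otimes \bc[t])}(v_\psi)$ contains the corresponding maximal ideal. Since $\ba_\lambda^{\lie g}$ is a finitely generated reduced $\bc$-algebra, it injects into the product of its residue fields at maximal ideals, so $\bigcap$ over these maximal ideals of $\ba_\lambda^{\lie g}$ is zero; pulling back, $I \subseteq \ann(w_\lambda)$.

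The main obstacle is the first part — specifically, justifying that $\ann_{U(\lie h\otimes\bc[t])}(w_\lambda)$ is exactly the product ideal $\sum_i (\text{ann from the } i\text{-th } \lie{sl}_2)$ rather than something larger. The inclusion ``$\supseteq$'' (the product ideal kills $w_\lambda$) is easy from the defining relations; the subtle direction ``$\subseteq$'' requires producing, for each maximal ideal of $\bigotimes_i S^{m_i}(\bc[t])$, an actual highest weight quotient of $W^{\lie g}(\lambda)$ on which $w_\lambda$ has exactly that annihilator locally — this is where the tensor-product-of-evaluation-modules construction does the work, and it is essentially the content of \cite{CP01}, Theorem~4 (and \cite{CFK10} for the second statement), so I would cite those for the heavy lifting and only sketch the reduction to $\lie{sl}_2$.
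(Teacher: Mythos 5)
The paper does not prove this statement at all: it is imported verbatim from the literature, with \cite{CP01} cited for the tensor-product description of $\ba_\lambda^{\lie g}$ and \cite{CFK10} for the identification of $I$ with the intersection of annihilators. So there is no in-paper argument to compare against; your proposal is a reconstruction of the standard proof from those references, and as such it is essentially sound. Both halves of your outline match the known arguments: the reduction to the rank-one subalgebras attached to the simple roots (legitimate because $\lie h=\bigoplus_i\bc h_{\alpha_i}$, so $U(\lie h\otimes\bc[t])\cong\bigotimes_i U(\bc h_{\alpha_i}\otimes\bc[t])$), the use of tensor products of evaluation modules to realize every maximal ideal of $\bigotimes_i S^{m_i}(\bc[t])$ and thereby rule out a larger annihilator, and the Jacobson-radical argument (a finitely generated reduced $\bc$-algebra has trivial intersection of maximal ideals) for $I\subseteq\ann(w_\lambda)$, combined with the observation that each $V(\psi)$ is a highest weight quotient of $W^{\lie g}(\lambda)$ for the easy inclusion. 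One caveat: you describe the inclusion asserting that the symmetric-algebra relations hold on $w_\lambda$ as ``easy from the defining relations.'' It does follow from them, but only via the Garland-type identities expressing $(x_{\alpha_i}^+\otimes t)^{(r)}(x_{\alpha_i}^-\otimes 1)^{(r+m_i)}w_\lambda$ in terms of symmetric functions in the $h_{\alpha_i}\otimes t^k$; that computation is the genuinely nontrivial core of the result in \cite{CP01}, and your sketch delegates it entirely to the $\lie{sl}_2$ case of that reference. Since the paper itself does the same delegation (wholesale), this is acceptable, but you should be explicit that this is where the real work lives rather than labelling it easy.
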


\noindent
For more on this algebra, the interested reader may also see \cite{CFK10}, \cite{CL12}.

\noindent
$W^{\lie g}(\lambda)$ admits the structure of a $(\lie g, \ba_{\lambda}^\lie g)$-bimodule, where the right action is given by
\[
u w_\lambda.h \otimes a := u(h \otimes a)w_\lambda
\]
for $u \in  U(\lie g \otimes \bc[t]), h \otimes a \in \lie a \otimes \bc[t]$.

\noindent
With this we have the following theorem which follows from results due to \cite{CP01} for $\lie g \cong \lie{sl}_2$, \cite{CL06} for $\lie g \cong \lie{sl}_{n+1}$, \cite{FoL06} for $\lie g$ of simply-laced type and \cite{Nao12} for all classical $\lie g$.
\begin{thm}\label{global-free}
Let $\lambda \in P^+$, then $W^{\lie g}(\lambda)$ is a free right $\ba_{\lambda}^{\lie g}$-module of finite rank.
\end{thm}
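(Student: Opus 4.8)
The plan is to prove \thmref{global-free} by reducing to the local Weyl modules, whose dimension behavior along the family of $\lambda$-admissible points is already understood, and then invoking a general freeness criterion for graded modules over polynomial rings.

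First I would recall the construction of local Weyl modules: for $\psi\in\mathcal E^\lambda$ the local Weyl module $W^{\lie g}(\psi)$ is obtained from $W^{\lie g}(\lambda)$ by quotienting by the action of a maximal ideal $\bmu_\psi\subset\ba_\lambda^{\lie g}$ corresponding to $\psi$, i.e.
\[
W^{\lie g}(\psi)\;\cong\;W^{\lie g}(\lambda)\otimes_{\ba_\lambda^{\lie g}}\bigl(\ba_\lambda^{\lie g}/\bmu_\psi\bigr).
\]
The key input, due to \cite{CP01} for $\lie{sl}_2$, \cite{CL06} for $\lie{sl}_{n+1}$, \cite{FoL06} in the simply-laced case and \cite{Nao12} in general classical type, is that $\dim W^{\lie g}(\psi)$ is independent of $\psi\in\mathcal E^\lambda$; more precisely, the local Weyl module factors as a tensor product of local Weyl modules at the points of $\supp(\psi)$, and its dimension equals $\prod_{i\in I}(\dim W^{\lie g}(\omega_i))^{m_i}$ when $\lambda=\sum m_i\omega_i$. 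I would take this numerical fact as the main ingredient supplied by the cited literature.

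Next I would set $A:=\ba_\lambda^{\lie g}\cong\bigotimes_{i\in I}S^{m_i}(\bc[t])$, which by \thmref{ba-lambda} is a polynomial ring, hence Noetherian, a domain, and graded (placing $t$ in degree $1$ makes $W^{\lie g}(\lambda)$ a graded $A$-module with finite-dimensional graded pieces, since the defining relations are homogeneous for the grading coming from $\bc[t]$). Over such a positively graded $\bc$-algebra $A$, a finitely generated graded module $M$ is free if and only if it is flat, if and only if $\dim_\bc\bigl(M\otimes_A A/\bmu\bigr)$ is the same for every maximal ideal $\bmu$ in the relevant parameter space — this is the standard ``constant fibre dimension implies local freeness, and graded plus local free implies free'' argument. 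I would first check that $W^{\lie g}(\lambda)$ is a finitely generated $A$-module: it is generated over $U(\lie n^-\otimes\bc[t])w_\lambda$, and integrability together with the $\lie g$-weight bound forces each graded piece to be finite-dimensional, while the $\lie n^-\otimes\bc[t]$-action is controlled modulo $A$ by finitely many elements (a PBW-type argument, e.g. the one in \cite{CFK10}). Then, by the second part of \thmref{ba-lambda}, the maximal ideals of $A$ of the form $\bmu_\psi$, $\psi\in\mathcal E^\lambda$, are Zariski-dense, and on each such fibre $W^{\lie g}(\lambda)\otimes_A(A/\bmu_\psi)\cong W^{\lie g}(\psi)$ has the constant dimension recalled above. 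Upper semicontinuity of fibre dimension for a coherent sheaf then forces the dimension to be constant on \emph{all} of $\operatorname{Spec}A$, hence $W^{\lie g}(\lambda)$ is locally free of that rank; being graded over a polynomial ring, it is in fact free of that finite rank.

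The main obstacle is the identification $W^{\lie g}(\lambda)\otimes_A(A/\bmu_\psi)\cong W^{\lie g}(\psi)$ together with the $\psi$-independence of $\dim W^{\lie g}(\psi)$: this is exactly the hard representation-theoretic content, and in characteristic-free generality it is genuinely deep (it is the place where the classical-type hypothesis and the cited works of Chari--Pressley, Chari--Loktev, Fourier--Littelmann and Naoi enter). Granting that, the remaining steps — finite generation of $W^{\lie g}(\lambda)$ over $A$, flatness from constant fibre dimension, and the passage from local freeness to freeness via the grading — are standard commutative algebra and I would present them succinctly. A mild technical point to be careful about is matching the grading conventions so that $A$ acts by graded endomorphisms and the fibres $W^{\lie g}(\psi)$ are recovered as honest quotients rather than merely associated graded objects; this is handled by using the generators $t-a$ (for $a\in\supp\psi$) to cut out $\bmu_\psi$ and noting that the relevant filtration is exhaustive and separated on each finite-dimensional graded piece.
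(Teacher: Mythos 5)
The paper does not actually prove this theorem: it is imported from \cite{CP01}, \cite{CL06}, \cite{FoL06} and \cite{Nao12}, and the paper's own remark that \thmref{global-free} is equivalent to Lemma~\ref{pointless} (with details deferred to the appendix of \cite{FMS12}) is exactly the reduction you propose — take the $\psi$-independence of $\dim W^{\lie g}(\psi)$ as the deep input and deduce freeness of the finitely generated graded $\ba_\lambda^{\lie g}$-module $W^{\lie g}(\lambda)$ by commutative algebra — so your route matches the intended one. One small repair: upper semicontinuity plus constancy of the fibre dimension on a Zariski-dense set of maximal ideals does not by itself force constancy everywhere (the jump locus could be a nonempty proper closed subset); but since \emph{every} maximal ideal of $\ba_\lambda^{\lie g}$ corresponds to some $\psi\in\mathcal{E}^\lambda$, the fibre dimension is constant at all closed points, and as $\ba_\lambda^{\lie g}$ is a finitely generated $\bc$-algebra (hence Jacobson) and a domain, this already gives local freeness, after which the graded (or Quillen--Suslin) argument yields freeness of finite rank.
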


\subsection{}\label{section-local}
The global Weyl module $W^{\lie g}(\lambda)$ induces a functor from the category of $\ba_\lambda^{\lie g}$-modules to the category of integrable $\lie g \otimes \bc[t]$-modules with weights bounded above by $\lambda$:
\[ 
M \mapsto W^{\lie g}(\lambda) \otimes_{\ba_\lambda^{\lie g}} M\; ; \; f \mapsto 1 \otimes f
\]

\noindent
Since $\ba_\lambda^{\lie g}$ is a polynomial ring in finitely many variables, so especially finite generated, any one-dimensional $\ba_\lambda^{\lie g}$-module is isomorphic to $\ba_\lambda^{\lie g}/\bm$ for a uniquely determined maximal ideal of $\ba_\lambda^{\lie g}$. Maximal ideals of $\ba_\lambda^{\lie g}$ are parameterized by $\mathcal{E}^{\lambda}$ (see \cite{CFK10} for details), so to any maximal ideal $\bm$ of $\ba_\lambda^{\lie g}$ there exists a finitely supported function $\psi$ of weight $\lambda$.

\begin{defn} Let $\lambda \in P^+$ and  $\psi \in \mathcal{E}^{\lambda}$, denote $\bm$ be the corresponding maximal ideal of $\ba_\lambda^{\lie g}$. The $\lie g \otimes \bc[t]$-module
\[ 
W^{\lie g}(\lambda) \otimes_{\ba_\lambda^{\lie g}}  \ba_\lambda^{\lie g}/\bm
\]
is called the \textit{ local Weyl module } associated to $\psi$ and denoted by $W^{\lie g}(\psi)$.
\end{defn}

\begin{rem}\label{remark-local} Again, the following remarks might be helpful.
\begin{enumerate}
\item The local Weyl module admits a \textit{universal property} similar to the previous one for global Weyl modules. Let $V$ be a cyclic finite-dimensional  $\lie g \otimes \bc[t]$ highest weight module, generated by a one-dimensional weight space of current weight $\psi$. Then $V$ is a quotient of $W^{\lie g}(\psi)$ (\cite{CP01}).
\item The definition of local Weyl modules is due to \cite{CP01}. They were defined analogously to modular representation theory. In this context the category of finite-dimensional modules is not semi-simple, so one may ask for the maximal object having a certain unique simple quotient.
\item In \cite{CFK10} the construction of local Weyl modules was generalized to an arbitrary commutative algebra $A$ instead of $\bc[t]$. The tensor functor from the category of $\ba_\lambda^{\lie g}$-module to the category of integrable $\lie g \otimes \bc[t]$-modules is called the \textit{Weyl functor}.
\item Local Weyl modules were studied also for the twisted loop algebra in \cite{CFS08}, for the twisted current algebra in \cite{FK12}, for equivariant map algebras (under some restrictions) in \cite{FKKS12}.
\end{enumerate}
\end{rem}

The simple $\lie g \otimes \bc[t]$-modules of highest (classical) weight are also parameterized by $\mathcal{E}^\lambda$ (see Section~\ref{section-2}). The universal property of the local Weyl modules gives the relation between these classes of modules. For any $\psi \in \mathcal{E}^\lambda$ we have a surjective map of $\lie g \otimes \bc[t]$-modules:
\[
W^{\lie g}(\psi) \twoheadrightarrow V(\psi).
\]

\noindent
Theorem~\ref{global-free} implies: 
\begin{lem}\label{pointless}
The dimension and $\lie g$-character of a local Weyl module does not depend on the chosen maximal ideal in $\ba_\lambda^{\lie g}$. In particular, for $\lambda \in \P^+$ and $\psi_1, \psi_2 \in \mathcal{E}^{\lambda}$:
\[
W^{\lie g }(\psi_1) \cong_{\lie g} W^{\lie g}(\psi_2).
\]
\end{lem}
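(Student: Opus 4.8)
The plan is to deduce this directly from \thmref{global-free}. Recall that $W^{\lie g}(\psi)$ is by definition the specialization $W^{\lie g}(\lambda)\otimes_{\ba_\lambda^{\lie g}} \ba_\lambda^{\lie g}/\bm$, where $\bm$ is the maximal ideal of $\ba_\lambda^{\lie g}$ corresponding to $\psi$ under the parametrization of $\Max(\ba_\lambda^{\lie g})$ by $\mathcal{E}^\lambda$. First I would invoke \thmref{global-free}: $W^{\lie g}(\lambda)$ is a free right $\ba_\lambda^{\lie g}$-module of some finite rank $N=N(\lambda)$. Since base change of a free module of rank $N$ along the quotient map $\ba_\lambda^{\lie g}\twoheadrightarrow \ba_\lambda^{\lie g}/\bm$ yields a $\bc$-vector space of dimension $N$, we get
\[
\dim_{\bc} W^{\lie g}(\psi) \;=\; N\cdot \dim_{\bc}\big(\ba_\lambda^{\lie g}/\bm\big) \;=\; N,
\]
which is manifestly independent of $\bm$, hence of $\psi\in\mathcal{E}^\lambda$. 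This already gives the dimension claim.

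For the $\lie g$-character statement I would refine the same argument $\lie g$-weight-space by $\lie g$-weight-space. The right $\ba_\lambda^{\lie g}$-action commutes with the left $\lie g\otimes\bc[t]$-action, and in particular with the $\lie h$-action coming from $\lie g\hookrightarrow \lie g\otimes\bc[t]$, $x\mapsto x\otimes 1$; thus each $\lie g$-weight space $W^{\lie g}(\lambda)_\tau$ is itself a right $\ba_\lambda^{\lie g}$-submodule. A free module is the direct sum of its graded/weight pieces only after one knows each piece is free, so the cleanest route is: the $\ba_\lambda^{\lie g}$-basis provided by \thmref{global-free} can be chosen to consist of $\lie h$-weight vectors (the bimodule structure respects the $\lie h$-grading, and $\ba_\lambda^{\lie g}$ sits in $\lie h$-weight $0$), so that $W^{\lie g}(\lambda)_\tau$ is free over $\ba_\lambda^{\lie g}$ of rank $N_\tau$ with $\sum_\tau N_\tau = N$. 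Specializing at $\bm$ then gives $\dim_{\bc} W^{\lie g}(\psi)_\tau = N_\tau$ for every $\tau\in P$, independent of $\psi$. Since the $\lie g$-character of any finite-dimensional $\lie g$-module is determined by its $\lie h$-weight multiplicities, and $W^{\lie g}(\psi_1)$, $W^{\lie g}(\psi_2)$ are finite-dimensional, it follows that $W^{\lie g}(\psi_1)\cong_{\lie g} W^{\lie g}(\psi_2)$.

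The only genuine point requiring care — the main (mild) obstacle — is the compatibility of the free-module structure with the $\lie h$-weight decomposition; i.e.\ that the basis in \thmref{global-free} may be taken to be $\lie h$-homogeneous. This is standard: $W^{\lie g}(\lambda)=\bigoplus_\tau W^{\lie g}(\lambda)_\tau$ as $\ba_\lambda^{\lie g}$-modules because the right action preserves $\lie h$-weight, and a projective (here free) module over the Noetherian graded-in-weight setup decomposes compatibly; alternatively one notes that each $W^{\lie g}(\lambda)_\tau$ is finitely generated and torsion-free over the polynomial ring $\ba_\lambda^{\lie g}$, and that the argument of \thmref{global-free} (or the cited references) in fact produces weight-homogeneous generators. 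Granting this, everything reduces to the elementary fact that the dimension of a fibre of a vector bundle is constant, and the lemma follows.
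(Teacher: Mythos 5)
Your proof is correct and follows essentially the same route as the paper, which offers no separate argument and simply asserts that Theorem~\ref{global-free} implies the lemma: specializing the free right $\ba_\lambda^{\lie g}$-module $W^{\lie g}(\lambda)$ at a maximal ideal, refined weight space by weight space for the character statement, is exactly the intended deduction. One small caveat: your alternative justification that each weight space is free because it is finitely generated and torsion-free over the polynomial ring $\ba_\lambda^{\lie g}$ fails in more than one variable (ideals are torsion-free but need not be free); rely instead on the fact that each weight space is a direct summand of a free module, hence finitely generated projective with locally constant fibre dimension, which is constant since $\operatorname{Spec}\ba_\lambda^{\lie g}$ is connected.
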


\noindent
Historically the theorem on local Weyl modules was proven first. In fact, Lemma~\ref{pointless} and Theorem~\ref{global-free} are equivalent (for details on this, we refer to the appendix in \cite{FMS12}).

\noindent
The local Weyl modules share the very important \textit{tensor product property}, similar to the one satisfied by the simple modules. It was proven in \cite{CP01}, and further generalized in \cite{FoL07}, \cite{CFK10}:
\begin{thm}\label{tensor}
Let $\lambda_1, \lambda_2 \in P^+$, $\psi_i \in \mathcal{E}^{\lambda_i}$. If $\supp (\psi_1) \cap \supp (\psi_2) = \emptyset$, then
\[
W^{\lie g} (\psi_1 + \psi_2) \cong_{\lie g \otimes \bc[t]} W^{\lie g}(\psi_1) \otimes W^{\lie g}(\psi_2).
\]
\end{thm}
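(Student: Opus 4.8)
\emph{Proof idea.} Write $\lambda = \lambda_1 + \lambda_2$ and $\psi = \psi_1 + \psi_2$; the hypothesis $\supp(\psi_1)\cap\supp(\psi_2)=\emptyset$ gives $\supp(\psi)=\supp(\psi_1)\sqcup\supp(\psi_2)$ and $\wt(\psi)=\lambda$. The plan is to produce surjective maps of $\lie g\otimes\bc[t]$-modules in both directions between $W^{\lie g}(\psi)$ and $W^{\lie g}(\psi_1)\otimes W^{\lie g}(\psi_2)$, each carrying cyclic generator to cyclic generator; since all modules in sight are finite dimensional, two such maps are automatically mutually inverse and hence isomorphisms. I would start from the standard fact that a local Weyl module $W^{\lie g}(\phi)$ is annihilated by $\lie g\otimes q\bc[t]$ as soon as $q$ is a sufficiently high power of $\prod_{a\in\supp(\phi)}(t-a)$; fix $N$ large enough that $W^{\lie g}(\psi_1)$, $W^{\lie g}(\psi_2)$ and $W^{\lie g}(\psi)$ are all annihilated in this way, and set $q_i=\big(\prod_{a\in\supp(\psi_i)}(t-a)\big)^N$. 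As the supports are disjoint, $q_1$ and $q_2$ are coprime, so the Chinese Remainder Theorem gives a surjection of Lie algebras $\lie g\otimes\bc[t]\twoheadrightarrow\bar{\lie g}_1\oplus\bar{\lie g}_2$ with $\bar{\lie g}_i=\lie g\otimes\bc[t]/(q_i)$, and $W^{\lie g}(\psi_i)$ is a module over $\bar{\lie g}_i$. Under this surjection the module $W^{\lie g}(\psi_1)\otimes W^{\lie g}(\psi_2)$ (with the diagonal action) is precisely the external tensor product of the two $\bar{\lie g}_i$-modules; in particular it is cyclic on $w:=w_{\psi_1}\otimes w_{\psi_2}$, and, writing $e_i\in\bc[t]/(q_1q_2)$ for the idempotent supported on $\supp(\psi_i)$, one has $\bar{\lie g}_i=\lie g\otimes e_i$ inside $\bar{\lie g}_1\oplus\bar{\lie g}_2$.

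The first surjection $W^{\lie g}(\psi)\twoheadrightarrow W^{\lie g}(\psi_1)\otimes W^{\lie g}(\psi_2)$ comes from the universal property of the local Weyl module (Remark~\ref{remark-local}(1)): the vector $w$ is killed by $\lie n^+\otimes\bc[t]$, spans its (one-dimensional) $\lie h$-weight space of weight $\lambda$, satisfies $(h\otimes t^r).w=\big(\sum_{a\in\supp(\psi_1)}\psi_1(a)(h)a^r+\sum_{b\in\supp(\psi_2)}\psi_2(b)(h)b^r\big)w$ --- which, thanks to the disjointness of supports, is exactly the scalar attached to the current weight $\psi$ --- and $(x_\alpha^-\otimes 1)^{\lambda(h_\alpha)+1}.w=0$ for all $\alpha\in R^+$ by the standard $\lie{sl}_2$-computation on a tensor product of highest weight vectors. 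For the second surjection $W^{\lie g}(\psi_1)\otimes W^{\lie g}(\psi_2)\twoheadrightarrow W^{\lie g}(\psi)$ I would use that, $U(\bar{\lie g}_1\oplus\bar{\lie g}_2)$ being $U(\bar{\lie g}_1)\otimes U(\bar{\lie g}_2)$, the module $W^{\lie g}(\psi_1)\otimes W^{\lie g}(\psi_2)$ is the cyclic module on $w$ subject precisely to the defining relations of $W^{\lie g}(\psi_i)$ imposed through the $\bar{\lie g}_i$-action, $i=1,2$; so it suffices to check that the generator $w_\psi$ of $W^{\lie g}(\psi)$ satisfies these relations. Once more $\lie n^+\otimes\bc[t]$ kills $w_\psi$; the element $h\otimes e_it^r$ acts on $w_\psi$ by the $\psi_i$-scalar, because $e_i$ vanishes on $\supp(\psi_j)$ for $j\neq i$; and the one genuinely substantive point is the integrability relation $(x_\alpha^-\otimes e_i)^{\lambda_i(h_\alpha)+1}.w_\psi=0$. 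This I would extract from the known relation $(x_\alpha^-\otimes 1)^{\lambda(h_\alpha)+1}.w_\psi=0$ by writing $x_\alpha^-\otimes 1=(x_\alpha^-\otimes e_1)+(x_\alpha^-\otimes e_2)$, a sum of two commuting nilpotent operators, expanding binomially, and observing that $h_\alpha\otimes e_i$ takes distinct values on the various terms (it commutes with $x_\alpha^-\otimes e_j$ for $j\neq i$ and shifts the $x_\alpha^-\otimes e_i$-count), which forces each term to vanish separately; applying a suitable power of $x_\alpha^+\otimes e_j$ to the surviving term then isolates $(x_\alpha^-\otimes e_i)^{\lambda_i(h_\alpha)+1}.w_\psi=0$.

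The main thing requiring care is exactly this last deduction, together with the input it rests on: that $W^{\lie g}(\psi_i)$, and also $W^{\lie g}(\psi)$, really is supported spectrally on $\supp(\psi_i)$ (resp. $\supp(\psi)$), so that the idempotents $e_i$ act on it and the relation $(x_\alpha^-\otimes 1)^{\lambda(h_\alpha)+1}.w_\psi=0$ can be split off into the two reduced relations at level $\lambda_i(h_\alpha)$. This splitting is the module-theoretic shadow of the binomial identity underlying the tensor product factorization of the simple modules, and it is the only place where the disjointness of the supports is used essentially; everything else --- cyclicity of the tensor product, the two universal-property arguments, and the conclusion that mutually surjective finite-dimensional modules are isomorphic --- is formal. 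Note that, unlike the ambient results on freeness and on the $\lie g$-character of local Weyl modules (Theorem~\ref{global-free}, Lemma~\ref{pointless}), this argument does not use that $\lie g$ is of classical type.
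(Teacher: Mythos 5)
The paper itself does not prove this statement: Theorem~\ref{tensor} is quoted from \cite{CP01} (and its generalizations in \cite{FoL07}, \cite{CFK10}), and the discussion that follows it in Section~\ref{section-3} only derives consequences via the Chinese Remainder decomposition. So there is no in-paper proof to compare against; judged on its own, your argument is essentially the standard route from the cited literature, and its one genuinely delicate step checks out. Concretely: the vectors $v_k=(x_\alpha^-\otimes e_1)^k(x_\alpha^-\otimes e_2)^{N-k}w_\psi$, $N=\lambda(h_\alpha)+1$, are eigenvectors of $h_\alpha\otimes e_1$ with pairwise distinct eigenvalues $\lambda_1(h_\alpha)-2k$ (using $e_1^2=e_1$ and $e_1e_2=0$), so the binomial expansion of $(x_\alpha^-\otimes 1)^{N}w_\psi=0$ does force each $v_k=0$ separately; and applying $(x_\alpha^+\otimes e_2)^{\lambda_2(h_\alpha)}$ to $v_{\lambda_1(h_\alpha)+1}$ yields the nonzero constant $(\lambda_2(h_\alpha)!)^2$ times $(x_\alpha^-\otimes e_1)^{\lambda_1(h_\alpha)+1}w_\psi$, which gives the reduced integrability relation as claimed. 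Two caveats. First, your second surjection presupposes the generators-and-relations presentation of $W^{\lie g}(\psi_i)$ (the original definition in \cite{CP01}: the highest current weight relations together with $(x_\alpha^-\otimes1)^{\lambda_i(h_\alpha)+1}w=0$); this paper instead defines local Weyl modules through the Weyl functor, and the equivalence of the two definitions is itself a theorem you are implicitly importing --- without it, describing the external tensor product as \enquote{the cyclic module subject to the union of the relations} has no purchase on $W^{\lie g}(\psi_i)$. Second, a cosmetic point: two surjections between finite-dimensional modules need not be mutually inverse; rather, their existence forces equality of dimensions, whence each surjection is an isomorphism. Your closing observation is also correct and worth retaining: unlike Theorem~\ref{global-free} and Proposition~\ref{classical-decom}, this argument nowhere uses that $\lie g$ is of classical type.
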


Let us briefly see some consequences. Let $\psi \in \mathcal{E}$ and $\supp (\psi) =  \{c_1, \ldots, c_s \}$, $\psi_c$ being supported on $c$ only such that $\psi = \psi_{c_1} + \ldots + \psi_{c_s}$.  Then is was shown in  \cite{CFS08} that
\[ \lie g \otimes \prod_{i =1}^{s} ( t - c_i)^{N_i} \bc[t] W^{\lie g}(\psi) = 0,
\]
where $N_i \geq \psi_{c_i}(h_{\theta})$, $\theta$ being the highest root of $\lie g$. \\
This implies that $W^{\lie g}(\psi) $ is actually a module for the quotient algebra (the equality below is nothing but an application of the Chinese Remainder Theorem)
\[ \lie g \otimes \bc[t]/ \lie g \otimes \prod_{i =1}^{s} ( t - c_i)^{N_i} \bc[t]  \cong \bigoplus_{i=1}^{s} \lie g \otimes \bc[t]/(t-c_i)^{N_i}\bc[t].\]
Let now $w_{\psi}, w_{\psi_{1}}, \ldots, w_{\psi_{s}}$ be highest weight generators of the Weyl modules 
$W^{\lie g}(\psi)$, $W^{\lie g}(\psi_{c_1})$, $\ldots$, $W^{\lie g}(\psi_{c_s}) $. Then
\[ U(\lie g \otimes \bc[t]).w_{\psi} = U( \lie g \otimes \bc[t]/ \lie g \otimes \prod_{i =1}^{s} ( t - c_i)^{N_i} \bc[t] ).w_\psi.\]
and the right hand side is equals
\[U(\lie g \otimes \bc[t]/(t-c_1)^{N_1} \bc[t]).w_{\psi_1} \bigotimes \ldots \bigotimes U(\lie g \otimes \bc[t]/(t-c_s)^{N_s} \bc[t]) .w_{\psi_s}\]
This is nothing but 
\[  W^{\lie g}(\psi_{c_1}) \otimes \ldots \otimes W^{\lie g}(\psi_{c_s}) . \]
We will use this observation in the following section.\\

To complete this survey on local Weyl modules we should recall the dimension and $\lie g$-character formulas for local Weyl modules. Theorem~\ref{tensor} tells us that the dimension and character of $W^{\lie g}(\psi)$ is nothing but the product of the dimensions and characters of the $W^\lie g(\psi_{c_i})$. \\
Now there are two reductions. First, since these information are independent of the support (Lemma~\ref{pointless}), we may assume that $\psi(c)$ is either $0$ or a fundamental weight. So it suffices to compute these information for local Weyl modules supported in a single point only and the highest weight is a fundamental weight. \\
Second, again since dimension and character are independent of the support (Lemma~\ref{pointless}), we are left with computing the local Weyl module for $\psi$ being supported in $0$ only and $\psi(0) = \omega_i$ (for some $i \in I$). In this case, the defining relations of the local Weyl module become homogeneous and $W^\lie g(\psi)$ is a graded $\lie g \otimes \bc[t]$-module.\\
The following list of the $\lie g$-decomposition of $W^{\lie g}(\psi_i)$ is due to \cite{Cha01} (see also \cite{FoL06}):
\begin{prop}\label{classical-decom}
Let $\lie g$ be a simple Lie algebra of type $X_n$ and $i \in I$, then in the various cases we have the following decomposition of the fundamental graded local Weyl module $W^{\lie g}(\psi_i)$ as a $\lie g$-module
\begin{enumerate}
\item Type $A_n$: 
\[
W^{\lie g}(\psi_i) \cong_{\lie g} V(\omega_i)
\]
\item Type $B_n$:
\[
W^{\lie g}(\psi_i) \cong_{\lie g} V(\omega_i) \oplus V(\omega_{i-2}) \oplus \ldots \oplus V(\omega_{\epsilon}) \text{ for } i < n
\]
\[
W^{\lie g}(\psi_n) \cong_{\lie g} V(\omega_n)
\]
\item Type $C_n$:
\[
W^{\lie g}(\psi_i) \cong_{\lie g} V(\omega_i)
\]
\item Type $D_n$
\[
W^{\lie g}(\psi_i) \cong_{\lie g} V(\omega_i) \oplus V(\omega_{i-2}) \oplus \ldots \oplus V(\omega_{\epsilon}) \text{ for } i < n-1
\]
\[
W^{\lie g}(\psi_i) \cong_{\lie g} V(\omega_i) \text{ for } i = n-1, n
\]
\end{enumerate}
where $\omega_{\epsilon} = \begin{cases} \omega_1 \text{ if } i \text{ is odd } \\ 0 \text{ if } i \text{ is even } \end{cases}$
\end{prop}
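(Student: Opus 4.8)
The plan is to reduce to the graded situation --- which the discussion preceding the statement already permits --- and then pin down the $\lie g$-character of $W := W^{\lie g}(\psi_i)$, with $\psi_i$ supported at $0$ of value $\omega_i$, by squeezing $W$ between an explicit $\lie g$-module from above and from below. First I would record the extra relations available in this graded case: since $\ba_{\omega_i}^{\lie g} \cong \bc[t]$ and the defining maximal ideal is $(t)$, one has $(\lie h \otimes t\bc[t])\cdot w = 0$, and (as recalled above) $W$ is a module over $\lie g \otimes \bc[t]/(t^N)$ for $N$ large enough. As a $\lie g$-module $W$ is integrable, hence completely reducible, and it is generated over $U(\lie n^- \otimes \bc[t])$ by the highest weight vector $w$.

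For the upper bound, for each $\alpha \in R^+$ I would restrict $w$ to the current algebra $\lie{sl}_2^{\alpha} \otimes \bc[t]$ attached to the $\lie{sl}_2$-triple of $\alpha$: the submodule $U(\lie{sl}_2^{\alpha} \otimes \bc[t])\cdot w$ is a quotient of the $\lie{sl}_2$-local Weyl module of highest weight $\omega_i(h_\alpha)$, which by \cite{CP01} has dimension $2^{\omega_i(h_\alpha)}$, and this yields Garland-type relations expressing monomials in the $x_\alpha^- \otimes t^k$ through a bounded subset of them. Feeding these relations (for $\alpha$ ranging over the simple roots) into a PBW ordering of $U(\lie n^- \otimes \bc[t])$ then bounds $W$ above by an explicit $\lie g$-module: in types $A_n$ and $C_n$, and at the spin nodes ($i = n$ in $B_n$, and $i = n-1, n$ in $D_n$), the bookkeeping forces $(\lie g \otimes t\bc[t])\cdot w = 0$, so $W$ becomes a quotient of the $\lie g$-module presented by $\lie n^+ w = 0$, $hw = \omega_i(h) w$, $(x_\alpha^-)^{\omega_i(h_\alpha)+1} w = 0$, which is $V(\omega_i)$; in the remaining cases in types $B_n$ and $D_n$ it leaves room at most for the additional summands $V(\omega_{i-2}), V(\omega_{i-4}), \dots$, with the convention $\omega_0 := 0$.

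For the lower bound, in types $A_n$, $C_n$ and the spin cases it is enough that $W \neq 0$ and has current highest weight $\psi_i$: then the evaluation module $\ev_{0} V(\omega_i)$, being a cyclic current-highest-weight module of current weight $\psi_i$, is a quotient of $W$ by the universal property of local Weyl modules, and together with the upper bound this gives $W \cong V(\omega_i)$. In the remaining $B_n$, $D_n$ cases I would, in each relevant graded degree, write down an explicit $\lie g$-highest-weight vector of $\lie g$-weight $\omega_{i-2j}$ --- a suitable element of $U(\lie n^- \otimes t\bc[t])\cdot U(\lie n^-)\cdot w$, symmetrised so that $\lie n^+ \otimes \bc[t]$ annihilates it --- and verify it is nonzero by restricting to a rank-two Levi subalgebra, where the computation is finite. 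By complete reducibility $U(\lie g)$ applied to such a vector is a copy of $V(\omega_{i-2j})$ occurring as a direct summand of $W$, and with the upper bound this forces the claimed decomposition.

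The hard part will be making the two bounds meet in the non-simply-laced types: with two root lengths the Garland identities for short and for long roots interact, and in type $C_n$ one must rule out exactly the lower components $V(\omega_{i-2j})$ that survive in type $B_n$ --- the asymmetry between $\wedge^i$ of the vector representation for $\lie{sp}$ and for $\lie{so}$. The clean way around this is to invoke the identification of $W^{\lie g}(\psi_i)$ with the classical limit of the Kirillov--Reshetikhin module $W^{i,1}$ of $U_q(\wh{\lie g})$ (\cite{Cha01}), equivalently with a level-one affine Demazure module (\cite{FoL06}, \cite{Nao12}), whose graded character is computed respectively by the fermionic formula and by the Demazure character formula; specialising $t \to 1$ then yields the stated $\lie g$-decomposition.
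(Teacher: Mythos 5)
This proposition is not proved in the paper at all: it is quoted from \cite{Cha01} (see also \cite{FoL06}), so your closing move --- deferring the actual character computation to the Kirillov--Reshetikhin/Demazure-module literature --- is precisely what the paper does, and at that level your proposal is correct and takes the same route. The sketch you give of what lies inside those references is also the right outline: an upper bound from the $\lie{sl}_2$-reduction $(x_\alpha^-\otimes t^k)\cdot w=0$ for $k\ge \omega_i(h_\alpha)$ fed into a PBW ordering, and a lower bound from explicit nonzero vectors such as $(x^-_{\varepsilon_j+\varepsilon_k}\otimes t)\cdot w$ (the paper manufactures exactly such vectors inside $W^{\lie g}(\psi)$ in the proof of Lemma~\ref{funda-res}). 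Two cautions if the sketch were meant to stand on its own. First, the Garland relations for the simple roots alone do not suffice: one needs them for all positive roots together with the Lie brackets among the corresponding root vectors (e.g.\ $x^-_{\varepsilon_1}=[x^-_{\varepsilon_1-\varepsilon_2},x^-_{\varepsilon_2}]$ in type $B$, which is how $(x^-_{\varepsilon_1}\otimes t)\cdot w=0$ is actually forced) to conclude $(\lie g\otimes t\bc[t])\cdot w=0$ in types $A$, $C$ and at the spin nodes; this ``bookkeeping'' is the entire content of the non-simply-laced cases and is left unverified in your text. Second, the blanket identification of $W^{\lie g}(\psi_i)$ with a \emph{level-one} affine Demazure module holds literally only in the simply-laced case and for long fundamental weights; at short nodes Naoi's theorem yields a Demazure flag rather than a single level-one Demazure module, so the Demazure character formula cannot be invoked quite as directly as you suggest. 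Neither caveat undermines your final appeal to \cite{Cha01}, which is the same justification the paper itself relies on.
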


\subsection{}\label{fusion-product}
We will recall the definition of fusion products and the construction of local Weyl modules supported in a single point only.\\

\noindent
The following construction is due to \cite{FL99}. We denote for $r \geq 0$
\[
\mathcal{F}^r = \{ u \in U(\lie g \otimes \bc[t]) \; | \; \deg(u) \leq r \}
\]
where the degree of a monomial is the sum of the degree of $t$ in its factors. Then $\mathcal{F}^{0} = U(\lie g)$. We set $\mathcal{F}^{-1} = 0$.\\
Let $W$ be a cyclic $\lie g \otimes \bc[t]$-module generated by a fixed vector $w$. We have an induced filtration on $W$:
\[ 
W^{r} := \mathcal{F}^{r}.w
\]
Then the associated graded module is
\[
W^{gr} = \bigoplus_{r \geq 0} W^{r}/W^{r-1}.
\]
Recall that, since $U(\lie g).\mathcal{F}^{r} = \mathcal{F}^{r}$ for $r \geq 1$, the graded components are $U(\lie g)$-modules.

\noindent
Let $\lambda_1, \ldots, \lambda_k \in P^+$, $\psi_i \in \mathcal{E}^{\lambda_i}$, $\supp (\psi_i) = \{ c_i \}$. Let $w_i$ a generator of the one-dimensional weight space $W^{\lie g}(\psi_i)_{\lambda_i}$. Then 
\[
W^{\lie g}(\psi_1) \otimes \ldots \otimes W^{\lie g}(\psi_k)
\]
is cyclic generated by $w_1 \otimes \ldots \otimes w_k$ if 
\[
c_i \neq c_j \text{ for all } i \neq j.
\]
The associated graded module is called the \textit{ fusion product  } (\cite{FL99}) and denoted by
\[
W^{\lie g}(\psi_1) \ast \ldots \ast W^{\lie g}(\psi_k).
\]
One may also find the notation
\[
W^{\lie g}(\lambda_1)_{c_1} \ast \ldots \ast W^{\lie g}(\lambda_k)_{c_k}
\]
in the literature.
\noindent
The following corollary follows immediately from Lemma~\ref{pointless}.
\begin{cor}\label{local-fusion} Let $\psi \in \mathcal{E}^{\lambda_1 + \ldots + \lambda_k}$ be defined by $\psi(0) = \lambda_1 + \ldots + \lambda_k$, then
\[
W^{\lie g}(\psi_1) \ast\ldots \ast W^{\lie g}(\psi_k) \cong_{\lie g \otimes \bc[t]} W^{\lie g}(\psi)
\]
\end{cor}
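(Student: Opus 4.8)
The idea is to identify the fusion product as a quotient of $W^{\lie g}(\psi)$ and then close the gap by a dimension count. By definition, $W^{\lie g}(\psi_1)\ast\cdots\ast W^{\lie g}(\psi_k)$ is the associated graded module of the cyclic module $W^{\lie g}(\psi_1)\otimes\cdots\otimes W^{\lie g}(\psi_k)$ with respect to the filtration induced by its generator $w := w_1\otimes\cdots\otimes w_k$. Since the points $c_i$ are pairwise distinct, \thmref{tensor} gives $W^{\lie g}(\psi_1)\otimes\cdots\otimes W^{\lie g}(\psi_k)\cong_{\lie g\otimes\bc[t]} W^{\lie g}(\psi_1+\cdots+\psi_k)$, a local Weyl module; write $\lambda := \lambda_1+\cdots+\lambda_k$, so this module is finite-dimensional, cyclic on $w$, has one-dimensional weight-$\lambda$ space $\bc w$, and $\lie n^+\otimes\bc[t]$ annihilates $w$.

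The key step is to check that the image $\bar w$ of $w$ in the top graded piece $W^0/W^{-1}$ satisfies the defining relations of $W^{\lie g}(\psi)$, where $\psi$ is supported at $0$ with $\psi(0)=\lambda$. Using that the degree-$j$ homogeneous component of $U(\lie g\otimes\bc[t])$ maps $W^r/W^{r-1}$ into $W^{r+j}/W^{r+j-1}$: the degree-$0$ elements $x_\alpha^+\otimes 1$, $h\otimes 1$, $x_\alpha^-\otimes 1$ act on $\bar w$ exactly as they act on $w$ in $W^0$, so $\lie n^+\otimes 1$ kills $\bar w$, $(h\otimes 1)\bar w=\lambda(h)\bar w$, and $(x_\alpha^-\otimes 1)^{\lambda(h_\alpha)+1}\bar w=0$; for $j\geq 1$, $(x\otimes t^j)w$ lies in $W^j$ and reduces to $(x\otimes t^j)\bar w$ in $W^j/W^{j-1}$, so $\lie n^+\otimes t^j$ kills $\bar w$, while $(h\otimes t^j)w$ is a weight-$\lambda$ vector, hence lies in $\bc w\subseteq W^0\subseteq W^{j-1}$, so $(h\otimes t^j)\bar w=0$. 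Thus $\bar w$ generates a cyclic finite-dimensional highest weight module of current weight $\psi$, and by the universal property of local Weyl modules (\remref{remark-local}\,(1)) there is a surjection $W^{\lie g}(\psi)\twoheadrightarrow W^{\lie g}(\psi_1)\ast\cdots\ast W^{\lie g}(\psi_k)$.

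Finally, passing to the associated graded module preserves dimension, so $\dim\bigl(W^{\lie g}(\psi_1)\ast\cdots\ast W^{\lie g}(\psi_k)\bigr)=\dim\bigl(W^{\lie g}(\psi_1)\otimes\cdots\otimes W^{\lie g}(\psi_k)\bigr)=\dim W^{\lie g}(\psi_1+\cdots+\psi_k)$; and since $\psi$ and $\psi_1+\cdots+\psi_k$ both lie in $\mathcal{E}^\lambda$, \lemref{pointless} gives $\dim W^{\lie g}(\psi_1+\cdots+\psi_k)=\dim W^{\lie g}(\psi)$. A surjection between finite-dimensional vector spaces of equal dimension is an isomorphism, which proves the corollary. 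The only point requiring care is the bookkeeping in the middle paragraph — in particular the use of one-dimensionality of the top weight space to force $\lie h\otimes t\bc[t]$ to annihilate $\bar w$; the rest is formal.
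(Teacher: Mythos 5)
Your proof is correct and fills in, carefully and completely, exactly the argument the paper leaves implicit when it says the corollary "follows immediately from Lemma~\ref{pointless}": the universal property gives a surjection from the graded local Weyl module onto the fusion product (your verification of the relations on $\bar w$, including the one-dimensionality of the top weight space forcing $\lie h\otimes t\bc[t]$ to kill $\bar w$, is the right bookkeeping), and the dimension equality from Lemma~\ref{pointless} forces it to be an isomorphism. This is the same approach as the paper's, just written out in full.
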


\noindent
Let $\psi_c \in \mathcal{E}^{\lambda}$ be supported in a single point only, say $\supp(\psi) = \{c \}$. Then $W^{\lie g}(\psi_c)$ can be constructed from $W^{\lie g}(\psi_0)$ by using the automorphism $\sigma_c$ of $\lie g \otimes \bc[t]$ induced by
\begin{eqnarray}
x \otimes t \mapsto x \otimes (t-c).
\label{eq:pullback}
\end{eqnarray}
Then $W^{\lie g}(\psi_c)$ is nothing but the pullback of $W^{\lie g}(\psi_0)$ via $\sigma_c$.

\noindent
With this, we can construct any local Weyl module $W^{\lie g}(\psi)$ as the tensor product of pullbacks of fusion products of fundamental local Weyl modules.

\section{Local Weyl modules and subalgebras}\label{section-4}
\subsection{}
In this section we prove Theorem~\ref{main-thm1}. Here, $\lie a$ is a simple Levi subalgebra of $\lie g$.
\begin{lem}\label{funda-res} Let $k \in I$ such that $(\lie a, \omega_k)$ is locally admissible and let $\psi \in \mathcal{E}^{\omega_k}$,  $w$ a generator of $W^{\lie g}(\psi)_{\omega_k}$. Then
\begin{eqnarray}
W^{\lie a}(\pi(\psi)) \cong U(\lie a \otimes \bc[t]).w \subseteq W^{\lie g}(\psi)
\label{eq:fund}
\end{eqnarray}

\end{lem}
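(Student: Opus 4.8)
The plan is to exploit the universal property of the global/local Weyl module for $\lie a\otimes\bc[t]$ together with the explicit $\lie g$-decomposition of fundamental local Weyl modules recalled in Proposition~\ref{classical-decom}. First I would reduce to the case $\supp(\psi)=\{0\}$ and $\psi(0)=\omega_k$: by the pullback automorphism $\sigma_c$ of \eqref{eq:pullback} and the fact that $\pi$ commutes with evaluation, $U(\lie a\otimes\bc[t]).w$ is carried isomorphically onto the analogous submodule in the shifted module, so neither side of \eqref{eq:fund} changes. In this graded situation, note that $w$ satisfies the defining relations of a highest weight vector for $\lie a\otimes\bc[t]$ of $\lie a$-weight $\pi(\omega_k)$: indeed $\lie n^+_{\lie a}\otimes\bc[t].w\subseteq\lie n^+\otimes\bc[t].w=0$, the $\lie h_{\lie a}\otimes 1$-action is scalar, and $(x_\beta^-\otimes 1)^{\pi(\omega_k)(h_\beta)+1}.w=0$ for $\beta\in R^+_{\lie a}$ since $h_\beta\in\lie h_{\lie a}\subseteq\lie h$ and the $\lie{sl}_2$-triple $x_\beta^\pm,h_\beta$ is literally one of the triples $x_\alpha^\pm,h_\alpha$ of $\lie g$. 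Hence by the universal property (Remark~\ref{remark-local}(1)) there is a surjection $W^{\lie a}(\pi(\psi))\twoheadrightarrow U(\lie a\otimes\bc[t]).w$, and it suffices to prove this is an isomorphism, for which — since $W^{\lie a}(\pi(\psi))$ is finite-dimensional — it is enough to compare dimensions, or better, to compare $\lie a$-characters.

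The key step is therefore a character computation. The target $U(\lie a\otimes\bc[t]).w$ sits inside $W^{\lie g}(\psi_k)\cong_{\lie g}V^{\lie g}(\omega_k)\oplus V^{\lie g}(\omega_{k-2})\oplus\cdots$ (with exactly one summand in types $A,C$ and in the spin cases). Restricting each $\lie g$-summand to $\lie a$ and picking out the $\lie a$-isotypic piece containing a highest weight vector of $\lie a$-weight $\pi(\omega_k)$, one gets an upper bound; on the other hand $W^{\lie a}(\pi(\psi))\cong_{\lie a}V^{\lie a}(\pi(\omega_k))\oplus(\text{lower terms})$ again by Proposition~\ref{classical-decom} applied to $\lie a$. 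The content of the definition of \emph{locally admissible} is precisely that in the admissible cases these two multiplicity pictures match: $\pi(\omega_k)$ is (zero or) a fundamental weight $\tau_j$ of $\lie a$, the chain $\omega_k,\omega_{k-2},\dots$ for $\lie g$ maps under $\pi$ onto the chain $\tau_j,\tau_{j-2},\dots$ for $\lie a$ with the right multiplicities, and no extra copies of $V^{\lie a}(\pi(\omega_k))$ or higher weights are produced. I would go through the cases of Definition~\ref{adm-defn}: when $\lie a$ is of type $A$ (Proposition~\ref{classical-decom}(1) gives a single summand on the $\lie a$-side, so one only needs the coefficient of $V^{\lie a}(\pi(\omega_k))$ in $\bigoplus V^{\lie g}(\omega_{k-2i})|_{\lie a}$ to be exactly $1$), when $\lie a$ is of type $B,D$ (match the full chains), and when $\lie a$ is of spin/vector type, using the branching of the relevant minuscule or vector representations. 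The excluded cases $(1)$ and $(2)$ in Definition~\ref{adm-defn} are exactly the ones where this count fails — e.g. a short simple root $\epsilon_i+\epsilon_j$ of $B_s$ forces $\pi(\omega_k)$ to be a non-fundamental combination, or in $C_n\supset A_s$ one lands outside $\{0,\tau_1,\dots,\tau_s\}$ — and these are precisely the situations to which the lemma does not apply.

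The main obstacle will be the case analysis in the character comparison: one must know the branching of $V^{\lie g}(\omega_k)$ to the Levi $\lie a$ well enough to verify that, summing over the chain of $\lie g$-fundamental modules appearing in $W^{\lie g}(\psi_k)$, the resulting $\lie a$-module contains $W^{\lie a}(\pi(\psi))$ with the correct multiplicities and nothing strictly larger. For Levi subalgebras generated by simple root vectors this branching is classical (it amounts to restricting minuscule, vector, and spin representations along a sub-Dynkin diagram), so the verification is mechanical in each type, but there are several families to check and the short-root subtlety in type $B$ is the delicate point — this is exactly why the hypothesis is stated as "locally admissible" rather than "arbitrary". Once the inclusion $U(\lie a\otimes\bc[t]).w\hookleftarrow W^{\lie a}(\pi(\psi))$ of the previous paragraph is combined with the reverse character inequality, equality of characters forces the surjection to be an isomorphism, giving \eqref{eq:fund}, and the general $\psi$ follows by the pullback reduction.
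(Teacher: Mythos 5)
Your reduction to $\supp(\psi)=\{0\}$ via $\sigma_c$ and your derivation of the surjection $W^{\lie a}(\pi(\psi))\twoheadrightarrow U(\lie a\otimes\bc[t]).w$ from the universal property are exactly the paper's first steps, and your explicit verification of the defining relations of $w$ as an $\lie a\otimes\bc[t]$-highest weight vector is correct. The problem is in the second half. Everything you propose to compute produces an \emph{upper} bound on $U(\lie a\otimes\bc[t]).w$: the surjection bounds it above by $W^{\lie a}(\pi(\psi))$, and restricting the ambient module $W^{\lie g}(\psi)$ to $\lie a$ and ``picking out the isotypic pieces'' again only bounds above what the cyclic submodule generated by $w$ could contain, since a priori the $\lie a\otimes\bc[t]$-submodule generated by $w$ need not meet every $\lie a$-isotypic component of the restriction. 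Two upper bounds never force equality; the inequality you actually need, namely that the character of $U(\lie a\otimes\bc[t]).w$ is \emph{at least} that of $W^{\lie a}(\pi(\psi))$, is precisely the content of the lemma and is nowhere established. (Your final arrow ``$U(\lie a\otimes\bc[t]).w\hookleftarrow W^{\lie a}(\pi(\psi))$'' is symptomatic: the universal property gives a surjection onto the submodule, not an injection, so there is no ``inclusion'' to combine with a reverse inequality.)

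The missing idea, which is the heart of the paper's argument, is a \emph{lower} bound obtained by exhibiting explicit nonzero vectors inside $U(\lie a\otimes\bc[t]).w$. In each non-simple case one writes down the weight difference between consecutive predicted $\lie a$-constituents (e.g.\ $\tau_\ell-\tau_{\ell-2}=\epsilon_{i_{s-\ell+1}}+\epsilon_{i_{s-\ell}}$), observes from the graded $\lie g$-decomposition of Proposition~\ref{classical-decom} that the corresponding vector $(x^-_{\gamma}\otimes t).w$ is nonzero in $W^{\lie g}(\psi)$, and argues by weight considerations that it must be an $\lie a$-highest weight vector of the predicted weight; iterating produces all summands of $W^{\lie a}(\pi(\psi))$ inside $U(\lie a\otimes\bc[t]).w$, and only then does the sandwich close. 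In the simple cases ($W^{\lie a}(\pi(\psi))\cong_{\lie a}V^{\lie a}(\pi(\omega_k))$) your argument already suffices, since the surjections $W^{\lie a}(\pi(\psi))\twoheadrightarrow U(\lie a\otimes\bc[t]).w\twoheadrightarrow V^{\lie a}(\pi(\psi))$ pin the dimension; but for $\lie a$ of type $B$ or $D$, or the $\tau_p+\tau_q$ cases, your proposal as written does not establish \eqref{eq:fund}.
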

\begin{proof}
First, let us explain why it suffices to prove the statement for $\psi$ being supported in $0$ only. Clearly, if $\psi \in \mathcal{E}^{\omega_i}$, then $|\supp (\psi) | = 1$, let $\supp (\psi) = \{c\}$. Using the pullback $\sigma_c$ \eqref{eq:pullback} we see that $\sigma_c^* W^\lie g(\psi) \cong W^\lie g(\psi_0)$, where $\psi_0(a) := \psi(a+c)$. But $\sigma_c$ restricts to an isomorphism of $\lie a \otimes \bc[t]$ as well. Acting on both sides of \eqref{eq:fund} with $\sigma_c$ we could deduce the statement of the lemma from the $c = 0$ case.\\
Since $U(\lie a \otimes \bc[t]).w$ is a cyclic $\lie a \otimes \bc[t]$-module of highest weight $\wt(\pi(\psi))$, we can use the universal property (Remark~\ref{remark-local}) of local Weyl modules. So we obtain a surjective map of $\lie a \otimes \bc[t]$-modules: 
\[
W^{\lie a}(\pi(\psi))  \twoheadrightarrow_{\lie a \otimes \bc[t]} U(\lie a \otimes \bc[t]).w.
\]
We also have
\begin{eqnarray}
U(\lie a \otimes \bc[t]).w \twoheadrightarrow_{\lie a \otimes \bc[t]} V^{\lie a}(\pi(\psi))
\label{eq:simple}
\end{eqnarray}
since $V^{\lie a}(\pi(\psi))$ is simple.\\
If $W^{\lie a}(\pi(\psi)) \cong_{\lie a} V^{\lie a}(\tau_s)$, where $\tau_s = \pi(\omega_k)$, then the claim follows. We refer in the following to this as a \textit{simple case}. For instance, if $\lie g$ is of type $A$, then we can read off Appendix~\ref{a2} that $\lie a$ is of type $A$ as well. Moreover we see that $\pi(\omega_k)$ is a fundamental weight for $\lie a$. Buth then Proposition~\ref{classical-decom} tells us that in these cases $W^{\lie a}(\pi(\psi)) \cong_{\lie a} V^{\lie a}(\tau_s)$. Hence if $\lie g$ is of type $A$ then the case is simple for all $\lie a$ and all $\omega_k$.

\noindent
Suppose $\lie g$ is of type $B_n$. We will check the various cases (see the Appendix 2 for notations):
\begin{enumerate}
\item Suppose $\lie a$ is of type $B_s$ and $\epsilon_i + \epsilon_j$ the unique simple short root in $\Pi_{\lie a}$. We will show that 
$ U(\lie a \otimes \bc[t]).w$ has the same classical decomposition as $W^{\lie a}(\pi(\psi)) $ (see Theorem~\ref{classical-decom}). Let $k \in I$:
\begin{enumerate}
\item if $i \leq k \leq n-1$, then $\pi(\omega_k) = 2 \tau_s$. This is not locally admissible (the submodule is a Demazure-module, see \cite{FoL06}).
\item if $k = n$, then $\pi(\omega_n) = \omega_s$, then it is a simple case and the claim follows.
\item if $k < i$, say $i_{s-\ell} \leq k < i_{s - \ell+1} +1$, then $\pi(\omega_k) = \tau_\ell$. We have to show that 
\begin{eqnarray}
 U(\lie a \otimes \bc[t]).w \cong_{\lie a} V^{\lie a}(\tau_\ell) \oplus V^{\lie a}(\tau_{\ell-2}) \oplus \ldots \oplus V^{\lie a}(\tau_{\epsilon}).
\label{eq:lem1}
\end{eqnarray}
$V(\tau_{\ell})$ is generated  as a $\lie a$-module through $w$. Then $\tau_{\ell} - \tau_{\ell - 2} = \epsilon_{i_{s - \ell+1}} + \epsilon_{i_{s - \ell}}$. We have (since $w \in  W^{\lie g}(\psi)$), by Theorem~\ref{classical-decom}:
\[
(x^{-}_{ \epsilon_{i_{s - \ell}} + \epsilon_{i_{s - \ell - 1}}} \otimes t).w \neq 0.
\]
For weight reason this has to be a highest weight vector with respect to the $\lie a$-action. So $V^{\lie a}(\tau_{\ell -2 })$ is a direct summand of $ U(\lie a \otimes \bc[t]).w$. Iterating this gives the predicted decomposition \eqref{eq:lem1}.
\end{enumerate}

\item Suppose $\lie a$ is of type $D_s$ and $\epsilon_i \pm \epsilon_j$ correspond to the spin nodes. Let $k \in I$:
\begin{enumerate}

\item if $i \leq k  <j$, then $\pi(\omega_k) = \tau_{s-1} + \tau_s$, so by tensor product property and Theorem~\ref{classical-decom}, we have to show that 
\begin{eqnarray}
U(\lie a \otimes \bc[t]).w \cong_{\lie a}  V^{\lie a}(\tau_s) \otimes V^{\lie a}(\tau_{s-1}) \cong V^{\lie a}(\tau_{s} + \tau_{s-1}) \oplus V^{\lie a}(\tau_{s-3}) \oplus V^{\lie a}(\tau_{s-5}) \oplus \ldots
\label{eq:lem2}
\end{eqnarray}
$\tau_{s} + \tau_{s-1} - \tau_{s-3} = \beta_{s-2} + \beta_{s-1} + \beta_s = \epsilon_{i_1} + \epsilon_i$. Then we have (since $w \in W^{\lie g}(\psi)$), by Theorem~\ref{classical-decom}:
\[
(x_{\epsilon_{i_1} + \epsilon_i}^- \otimes t).w \neq 0
\]
Again, by weight reasons, this has to be a $\lie a$ highest weight vector, and, again iterating this, gives the decomposition \eqref{eq:lem2}.

\item if $j \leq k <n $, then $\pi(\omega_k) = 2 \omega_s$, so by the tensor product property and Theorem~\ref{classical-decom}, we have to show that 
\begin{eqnarray}
U(\lie a \otimes \bc[t]).w \cong_{\lie a}  V^{\lie a}(\tau_s) \otimes V^{\lie a}(\tau_{s}) \cong V^{\lie a}(2\tau_{s}) \oplus V^{\lie a}(\tau_{s-2}) \oplus V^{\lie a}(\tau_{s-4}) \oplus \ldots
\label{eq:lem3}
\end{eqnarray}
$2 \tau_s - \tau_{s-2} = \beta_s = \epsilon_i + \epsilon_j$.  Then we have (since $w \in W^{\lie g}(\psi)$), by Theorem~\ref{classical-decom}:
\[
(x_{\epsilon_{i} + \epsilon_j}^-\otimes t).w \neq 0
\]
Again, by weight reasons, this has to be a $\lie a$ highest weight vector, and, again iterating this, gives the decomposition \eqref{eq:lem3}.

\item if $k = n$, then $\pi(\omega_n) = \omega_s$ it is a simple case.

\item if $k  < i$, then $\pi(\omega_k) = \tau_\ell$ for some $\ell < s-2$, so by Theorem~\ref{classical-decom}, we have to show that 
\begin{eqnarray}
U(\lie a \otimes \bc[t]).w \cong_{\lie a}  V^{\lie a}(\tau_{\ell}) \oplus V^{\lie a}(\tau_{\ell-2}) \oplus V^{\lie a}(\tau_{\ell-4}) \oplus \ldots
\label{eq:lem4}
\end{eqnarray}
The same argument as above gives with $\tau_k - \tau_{k-2} = \epsilon_{i_{s-k}} + \epsilon_{i_{s-k+1}}$ the predicted decomposition \eqref{eq:lem4}.
\end{enumerate}

\item Suppose $\lie a$ is of type $A_s$ and $\epsilon_i + \epsilon_j \in \Pi_{\lie a}$. Let $k \in I$:
\begin{enumerate}
\item if $ k = n$ or $\min\{i_\ell, j_{s- \ell -1}\} \leq k < \max\{i_\ell, j_{s- \ell - 1} \}$, then it is a simple case.

\item if $\max\{i_\ell, j_{\ell'}\} \leq k < n$, then $\pi(\omega_k) = \tau_p + \tau_q$ for some $p \leq q$. So by the tensor product property and Theorem~\ref{classical-decom}, we have to show that 
\begin{eqnarray}
U(\lie a \otimes \bc[t]).w \cong_{\lie a}  V^{\lie a}(\tau_p) \otimes V^{\lie a}(\tau_{q}) \cong V^{\lie a}(\tau_p + \tau_q) \oplus V^{\lie a}(\tau_{p-1} + \tau_{q+1}) \oplus \ldots
\label{eq:lem5}
\end{eqnarray}
$\tau_p + \tau_q - (\tau_{p-1} + \tau_{q+1}) = \beta_p +\ldots + \beta_q = \epsilon_{i_p} + \epsilon_{j_q}$ for certain $i_p, j_q$. Then we have, since $w \in W^{\lie g}(\psi)$,
\[
(x_{\epsilon_{i_p} + \epsilon_{i_q}}^- \otimes t).w \neq 0.
\]
Again, by weight reasons, this has to be a $\lie a$ highest weight vector, and, again iterating this, gives the decomposition \eqref{eq:lem5}.
\end{enumerate}

\item Finally if $\lie a$ is of type $A_s$ and $\epsilon_i + \epsilon_j \notin \Pi_{\lie a}$ for all $i \leq j$ it is a simple case.
\end{enumerate}

\noindent
For $\lie g$ of type $D_n$ the proof is similar to the $B_n$-case.

\noindent
Suppose $\lie g$ is of type $C_n$.  We will check the various cases (see Appendix~\ref{a2} for notations):
\begin{enumerate}
\item If $\lie a$ is of type $C_s$, then $\pi(\omega_k) = \tau_p$ for some $p \in J$ it is a simple case.

\item If $\lie a$ is of type $A_s$ and $\epsilon_i + \epsilon_j \in \Pi_{\lie a}$, then
\begin{enumerate}
\item if $  \max\{i_\ell, j_{s-\ell -1}\} \leq k \leq n $ then $\pi(\omega_k) = \tau_p + \tau_q$ for some $p \leq q$. In this case $(\lie a, \omega_k)$ is not locally admissible.
\item  if $ \min \{i_\ell, j_{s-\ell-1}\} \leq k < \max \{i_\ell, j_{s-\ell-1}\}$ it is a simple case.
\end{enumerate}

\item if $\lie a$ is of type $A_s$ and $\epsilon_i + \epsilon_j \notin \Pi_{\lie a}$ for all $ i \leq j$ it is a simple case. 
\end{enumerate}
\end{proof}

\noindent
One can deduce from this list that if $(\lie a, \omega_k)$ is not locally admissible, then the restriction of a fundamental local Weyl module is not a Weyl module for $\lie a \otimes \bc[t]$.\\

\noindent
The proof of the theorem is an exhausting case-by-case consideration. The proof in the more general case of $\lie a$ being a semi-simple Levi subalgebra (for a list of all semi-simple Levi subalgebras see Table 9 in \cite{Dyn52}) uses similar arguments and we would like to omit the computations. Nevertheless, we will use this more general result of Theorem~\ref{main-thm1} in the following section in the case where $\lie a$ is generated by simple root vectors (of $\lie g$). For sake of completeness we include here the proof of this case. Note that the local Weyl modules for $(\lie a_1 \oplus \lie a_2) \otimes \bc[t]$ are isomorphic to the tensor product of local Weyl modules for $\lie a_1 \otimes \bc[t]$ and local Weyl modules for $\lie a_2 \otimes \bc[t]$).
\begin{proof} Suppose $\lie a$ is a semi-simple Levi subalgebra of $\lie g$ and generated by simple root vectors (of $\lie g$). Let $\lie a = \lie a_1 \oplus \ldots \oplus \lie a_m$ be a decomposition into simple Lie algebras. Then $\lie a_i$ is of type $X_i$ where $X_i \in \{A_\ell, B_\ell, C_\ell, D_\ell \}$. The assumption that $\lie a$ is generated by simple root vectors implies that the Dynkin diagram of $\lie a$ is a full subdiagram of the Dynkin diagram of $\lie g$, e.g. the diagram obtained through the vertices corresponding to the simple root vectors generating $\lie a$ and all arrows between two of these vertices.\\ 
An inspecting of Table Fin in Chapter 4 of \cite{Kac90} shows that all but at most one of the connected components of the subdiagram are simply-laced strings. This implies that the corresponding simple Lie algebra is of type $A$. Hence there is at most one $\lie a_i$ which is not of type $A$, in case may this be $\lie a_m$. \\ 
We denote in the following by $\pi_i$ the projection $\lie h^* \longrightarrow \lie h_i^*$. Let $(\lie a, \omega_k)$ be locally admissible, $\psi \in \mathcal{E}^{\omega_k}$ and $w$ a highest weight generator. Then by the previous proof we know that $U(\lie a_i \otimes \bc[t]).w$ is isomorphic to the local Weyl module of highest loop  weight $\pi_i(\psi)$ (resp. highest $\lie a$ weight $\pi(\omega_k)$) for $\lie a_i \otimes \bc[t]$.\\
Furthermore, by the universal property of local Weyl modules (Remark~\ref{remark-local}) we know that $U(\lie a \otimes \bc[t]).w$ is a quotient of $W^{\lie a}(\pi(\psi)) \cong  W^{\lie a_1}(\pi_1(\psi)) \otimes \ldots \otimes W^{\lie a_m}(\pi_m(\psi))$. To prove that this is not a proper quotient but actually an isomorphism it suffices to compare the dimensions.\\
We will prove the claim by induction, and assume that for $j \leq m$:
\[
U((\lie a_j  \oplus \ldots \oplus \lie a_m) \otimes \bc[t]).w \cong_{(\lie a_j  \oplus \ldots \oplus \lie a_m) \otimes \bc[t]} W^{\lie a_j}(\pi_j(\psi)) \otimes \ldots \otimes W^{\lie a_m}(\pi_m(\psi)) \subset W^{\lie g}(\psi).
\]
Let us denote $W^{\lie a_j}(\pi_j(\psi)) \otimes \ldots \otimes W^{\lie a_m}(\pi_m(\psi))$ by $W$ for short.
Since $\lie a_{j-1}$ is of type $A$ for $j \leq m$ and hence 
\[
W^{\lie a_{j-1}}(\pi_{j-1}(\omega_k))  \cong_{\lie a_{j-1} \otimes \bc[t]} V^{\lie a_{j-1}}(\pi_{j-1}(\psi)) \cong_{\lie a_{j-1}} V^{\lie a_{j-1}}(\pi_{j-1}(\omega_k)),
\]  
(where $V^{\lie a_{j-1}}(\pi_{j-1}(\omega_k))$ denotes the simple $\lie a_{j-1}$-module of highest weight $\pi_{j-1}(\omega_k))$ we have to show that
\begin{eqnarray}
U(\lie a_{j-1} \otimes \bc[t])U((\lie a_j  \oplus \ldots \oplus \lie a_m) \otimes \bc[t]).w \cong_{\lie a_{j-1}}   V^{\lie a_{j-1}}(\pi_{j-1}(\omega_k)) \otimes W.
\label{eq:semi-weyl}
\end{eqnarray}
Since $[\lie a_{j-1}, \lie a_{j+\ell}] = 0$ for $\ell \geq 0$, the right hand side is isomorphic (as a $\lie a_{j-1}$-module) to $\dim W$ copies of $V^{\lie a_{j-1}}(\pi_{j-1}(\omega_k))$. On the other hand,  let $u \in U((\lie a_j  \oplus \ldots \oplus \lie a_m) \otimes \bc[t])$ such that $u.w \neq 0$, then we have a sequence of $\lie a_{j-1} \otimes \bc[t]$-modules:
\[
W^{\lie a_{j-1}} (\pi_{j-1}(\psi)) \twoheadrightarrow U(\lie a_{j-1} \otimes \bc[t])u.w \twoheadrightarrow V^{\lie a_{j-1}}(\pi_{j-1}(\psi)).
\]
Now, as $W^{\lie a_{j-1}} (\pi_{j-1}(\psi)) \cong_{\lie a_{j-1} \otimes \bc[t]} V^{\lie a_{j-1}}(\pi_{j-1}(\psi)) \cong_{\lie a_{j-1}} V^{\lie a_{j-1}}(\pi_{j-1}(\omega_k)) $, we see that the left hand side of \eqref{eq:semi-weyl} is isomorphic (again as a $\lie a_{j-1}$-module) to $\dim W$ copies of $V^{\lie a_{j-1}}(\pi_{j-1}(\omega_k))$. This gives the induction step and finishes the proof.
\end{proof}

\subsection{}
The rest of the section is dedicated to the proof of Theorem~\ref{main-thm2}(i). So from now on we will assume that $\Pi_{\lie a} \subset \Pi_{\lie g}$, so $\Pi_{\lie a} = \{ \alpha_{i_1}, \ldots, \alpha_{i_s}\}$. Then we have the following crucial proposition:
\begin{prop}\label{prop-cruc}
Let $W$ be a finite-dimensional $\lie g \otimes \bc[t]$-module such that there exists a weight vector $w \in W$ with $U(\lie{n}^- \otimes \bc[t]).w = W$, and let $\lambda$ be the weight of $w$. Then
\[
U(\lie a \otimes \bc[t]).w =  \sum_{ \mu \in  Q^+_{\lie a}}W_{\lambda - \mu}
\]
\end{prop}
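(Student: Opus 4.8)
The plan is to prove the equality $U(\lie a\otimes\bc[t]).w=W':=\sum_{\mu\in Q^+_{\lie a}}W_{\lambda-\mu}$ by two inclusions, abbreviating $V:=U(\lie a\otimes\bc[t]).w$. The combinatorial fact I would use throughout is that, because $\Pi_{\lie a}\subset\Pi$, the root subsystem of $\lie a$ is $R_{\lie a}=R\cap Q_{\lie a}$, whence $R^+_{\lie a}=R^+\cap Q_{\lie a}$ and, crucially, $Q^+\cap Q_{\lie a}=Q^+_{\lie a}$: a nonnegative integer combination of the $\alpha_i$ which lies in $\bz\Pi_{\lie a}$ must have zero coefficient at every $\alpha_i\notin\Pi_{\lie a}$. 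First I would check $V\subseteq W'$. Since $W=U(\lie n^-\otimes\bc[t]).w$ and $\lie n^-\otimes\bc[t]$ acts by $\lie g$-weight vectors of weights $-\alpha$ ($\alpha\in R^+$), all weights of $W$ lie in $\lambda-Q^+$, so $W'=\bigoplus_{\nu\in Q^+\cap Q_{\lie a}}W_{\lambda-\nu}$. As $\lie a\subseteq\lie h_{\lie a}\oplus\bigoplus_{\gamma\in R_{\lie a}}\lie g_\gamma$, every element of $\lie a\otimes\bc[t]$ shifts $\lie g$-weights by elements of $Q_{\lie a}$, so it sends $W_{\lambda-\nu}$ with $\nu\in Q^+\cap Q_{\lie a}$ into $\sum_{\nu'\in(\nu+Q_{\lie a})\cap Q^+}W_{\lambda-\nu'}\subseteq W'$. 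Thus $W'$ is an $\lie a\otimes\bc[t]$-submodule of $W$ containing $w\in W_\lambda$, giving $V\subseteq W'$.

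For the reverse inclusion I would first record a PBW decomposition. Set $\lie m^-:=\bigoplus_{\alpha\in R^+\setminus R^+_{\lie a}}\lie g_{-\alpha}$, so $\lie n^-\otimes\bc[t]=(\lie n^-_{\lie a}\otimes\bc[t])\oplus(\lie m^-\otimes\bc[t])$ as vector spaces. If $\alpha,\alpha'\in R^+\setminus R^+_{\lie a}$ and $\alpha+\alpha'$ is a root, then $\alpha+\alpha'\in R^+$ still has a positive coefficient at some $\alpha_i\notin\Pi_{\lie a}$, hence $\alpha+\alpha'\notin Q_{\lie a}$, i.e. $\alpha+\alpha'\in R^+\setminus R^+_{\lie a}$; so $\lie m^-$ is a subalgebra of $\lie g$, and choosing a PBW basis of $U(\lie n^-\otimes\bc[t])$ with the $\lie m^-\otimes\bc[t]$-factors first gives $U(\lie n^-\otimes\bc[t])=U(\lie m^-\otimes\bc[t])\,U(\lie n^-_{\lie a}\otimes\bc[t])$. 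Since $U(\lie n^-_{\lie a}\otimes\bc[t]).w\subseteq V$, this yields $W=U(\lie m^-\otimes\bc[t]).V$.

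Finally I would decompose $W=W'\oplus W''$ as vector spaces, where $W'':=\bigoplus_{\nu\in Q^+\setminus Q_{\lie a}}W_{\lambda-\nu}$, and expand $U(\lie m^-\otimes\bc[t])$ in its PBW monomials. The trivial monomial contributes exactly $V$; a nontrivial monomial shifts $\lie g$-weights by $-\eta$ with $\eta$ a nonempty sum of roots from $R^+\setminus R^+_{\lie a}$, and such $\eta$ has a positive coefficient at some $\alpha_i\notin\Pi_{\lie a}$, so $\eta\in Q^+\setminus Q_{\lie a}$; since the weights of $V$ lie in $\lambda-(Q^+\cap Q_{\lie a})$ by the first step, every nontrivial monomial maps $V$ into $W''$. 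Hence $W=U(\lie m^-\otimes\bc[t]).V\subseteq V+W''$, and for $x\in W'$, writing $x=v+y$ with $v\in V\subseteq W'$ and $y\in W''$ forces $y=x-v\in W'\cap W''=0$, so $x=v\in V$. Thus $W'\subseteq V$, and combined with the first step, $U(\lie a\otimes\bc[t]).w=V=W'$.

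I do not expect a genuine obstacle: the entire argument is bookkeeping with simple-root coefficients. The one point that demands care is the ``no feedback'' assertion, namely that nontrivial $\lie m^-\otimes\bc[t]$-monomials cannot contribute to the weight spaces $W_{\lambda-\mu}$ with $\mu\in Q^+_{\lie a}$; this is exactly why I pass to the honest direct-sum decomposition $W=W'\oplus W''$ and conclude by a projection/intersection argument, rather than attempting a weight-multiplicity count that would have to rule out cancellations by hand.
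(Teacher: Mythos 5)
Your proof is correct and follows essentially the same route as the paper: a PBW splitting of $U(\lie n^-\otimes\bc[t])$ into the $\lie n^-_{\lie a}$-part and the complementary negative root vectors, followed by the observation that any nontrivial monomial from the complement shifts the weight by something with a positive coefficient at a simple root outside $\Pi_{\lie a}$, hence lands outside the weight spaces $W_{\lambda-\mu}$, $\mu\in Q^+_{\lie a}$. The only differences are cosmetic (you order the PBW factors the other way, note that $\lie m^-$ is a subalgebra, and spell out the easy inclusion $U(\lie a\otimes\bc[t]).w\subseteq\sum_{\mu}W_{\lambda-\mu}$ that the paper leaves implicit).
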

\begin{proof}
We have $R^+ = R^+_{\lie a} \cup R'$ and $\lie n^- = \lie n^-_{\lie a} \oplus \lie n^-_c$, where $\lie n^-_c$ is spanned by all root vectors $x_\alpha^-$, with $\alpha \in R'$. With the PBW Theorem we have
\[
W = U(\lie n^- \otimes \bc[t]).w = U(\lie n^-_{\lie a} \otimes \bc[t]).w_\lambda + U(\lie n^-_{\lie a} \otimes \bc[t]) U(\lie n^-_c \otimes \bc[t])_{> 0}.w_\lambda
\]
The weights in the second summand are all of the form $\lambda - \gamma_{\lie a} - n_i \alpha_i$, where $\gamma_{\lie a} \in Q^{+}_{\lie a}$ and $n_i >0$. This implies for $\mu \in Q^{+}_{\lie a}$
\[
W_{\lambda - \mu} = (U(\lie n^-_{\lie a} \otimes \bc[t]).w)_{\lambda - \mu}
\]
and this gives the proof.
\end{proof}

\subsection{}
For a given $\mu \in P^+$ and $c \in \bc$, we denote by $\mu_c \in \mathcal{E}^{\mu}$ the function defined by $\mu_c(b) = \delta_{cb}\mu$. For a given $\psi \in \mathcal{E}^{\lambda}, \lambda \in P^+$,  we can write
\[
\psi = \sum_{a \in \supp(\psi)} \psi(a)_a.
\]
The following proposition shows that for $\psi \in \mathcal{E}$, the dimension of $U(\lie a \otimes \bc[t])w_{\psi}$ depends on the classical weight of $\psi$ only but not on the support. Namely for $\psi_1, \psi_2 \in \mathcal{E}^\lambda$ we can conclude
\[ \dim U(\lie a \otimes \bc[t])w_{\psi_1} = \dim U(\lie a \otimes \bc[t])w_{\psi_2}. \]

\begin{prop}\label{prop-filtration-res}
Let $\lambda \in P^+$ and $\psi \in \mathcal{E}^\lambda$. Denote by $w_{\lambda}$ a generator of the local graded Weyl module $W^{\lie g}(\lambda_0)$ and for $a \in \supp(\psi)$, denote by $w_a$ a generator of $W^{\lie g}(\psi(a)_{a})$. Then
\[
\dim U(\lie a\otimes \bc[t]).w_\lambda  = \dim \bigotimes_{a \in \supp(\psi)} U(\lie a \otimes \bc[t]).w_a.
\]
\end{prop}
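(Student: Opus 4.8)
The plan is to insert both sides of the asserted equality between two applications of \propref{prop-cruc} and then invoke the support-independence of the $\lie g$-character of a local Weyl module (\lemref{pointless}). Throughout write $\lambda=\wt(\psi)$, so that $\lambda$ is the weight of $w_\lambda$, and $\supp(\psi)=\{a_1,\dots,a_s\}$, $\psi=\sum_{a\in\supp(\psi)}\psi(a)_a$.

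\emph{Step 1: rewrite the right-hand side.} I would first identify $\dim\bigotimes_{a}U(\lie a\otimes\bc[t]).w_a$ with the dimension of the $\lie a\otimes\bc[t]$-submodule of $W^{\lie g}(\psi)$ generated by its highest weight vector. By \thmref{tensor} and the discussion following it there is an isomorphism $W^{\lie g}(\psi)\cong_{\lie g\otimes\bc[t]}\bigotimes_{a}W^{\lie g}(\psi(a)_a)$ carrying the cyclic generator $w_\psi$ of $W^{\lie g}(\psi)$ to $\bigotimes_a w_a$ (up to a scalar, since in both modules the top weight space is one-dimensional). Then, exactly as in the proof of \lemref{res-simple}: choosing $N_a\ge\psi(a)(h_\theta)$, each factor $W^{\lie g}(\psi(a)_a)$ is annihilated by $\lie g\otimes(t-a)^{N_a}\bc[t]$, so by the Chinese Remainder Theorem $W^{\lie g}(\psi)$ is a module for $\bigoplus_a\lie g\otimes\bc[t]/(t-a)^{N_a}\bc[t]$ with the $a$-th summand acting only on the $a$-th tensor factor; since $\lie a\otimes\bc[t]$ maps onto $\bigoplus_a\lie a\otimes\bc[t]/(t-a)^{N_a}\bc[t]$ we get
\[
U(\lie a\otimes\bc[t]).w_\psi \;=\; \bigotimes_{a\in\supp(\psi)}U\big(\lie a\otimes\bc[t]/(t-a)^{N_a}\bc[t]\big).w_a \;=\; \bigotimes_{a\in\supp(\psi)}U(\lie a\otimes\bc[t]).w_a .
\]
In particular the right-hand side of the proposition equals $\dim U(\lie a\otimes\bc[t]).w_\psi$.

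\emph{Step 2: compute both quantities via \propref{prop-cruc}.} Both $W^{\lie g}(\psi)$ (with generator $w_\psi$) and $W^{\lie g}(\lambda_0)$ (with generator $w_\lambda$) are finite-dimensional $\lie g\otimes\bc[t]$-modules obtained by applying $U(\lie n^-\otimes\bc[t])$ to a weight vector of weight $\lambda$, so \propref{prop-cruc} applies to each and yields
\[
U(\lie a\otimes\bc[t]).w_\psi=\sum_{\mu\in Q^+_{\lie a}}W^{\lie g}(\psi)_{\lambda-\mu}
\qquad\text{and}\qquad
U(\lie a\otimes\bc[t]).w_\lambda=\sum_{\mu\in Q^+_{\lie a}}W^{\lie g}(\lambda_0)_{\lambda-\mu}.
\]
Since $\psi$ and $\lambda_0$ both lie in $\mathcal{E}^{\lambda}$, \lemref{pointless} gives $W^{\lie g}(\psi)\cong_{\lie g}W^{\lie g}(\lambda_0)$, hence $\dim W^{\lie g}(\psi)_{\nu}=\dim W^{\lie g}(\lambda_0)_{\nu}$ for all $\nu$; as both sums run over the same lattice $Q^+_{\lie a}$, the two dimensions agree. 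Chaining the identities gives $\dim U(\lie a\otimes\bc[t]).w_\lambda=\dim U(\lie a\otimes\bc[t]).w_\psi=\dim\bigotimes_{a\in\supp(\psi)}U(\lie a\otimes\bc[t]).w_a$, which is the claim.

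The only step that is not purely formal is the tensor-factorisation of the $\lie a\otimes\bc[t]$-action in Step 1, i.e.\ that $U(\lie a\otimes\bc[t])$ applied to $\bigotimes_a w_a$ respects the tensor decomposition of $\bigotimes_a W^{\lie g}(\psi(a)_a)$; I expect this (rather mild) point to be the main obstacle, and it is handled by the Chinese Remainder Theorem precisely as in \lemref{res-simple}. Alternatively one can bypass it entirely: \thmref{tensor} together with \lemref{pointless} gives $\ch W^{\lie g}(\lambda_0)=\prod_a\ch W^{\lie g}(\psi(a)_a)$ as $\lie g$-characters, and since $\Pi_{\lie a}\subseteq\Pi$ the ``truncated count'' $V\mapsto\sum_{\mu\in Q^+_{\lie a}}\dim V_{\nu-\mu}$ (for a highest weight module with top weight $\nu$) is multiplicative under tensor products; applying \propref{prop-cruc} to each factor $W^{\lie g}(\psi(a)_a)$ then produces the identity directly.
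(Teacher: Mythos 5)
Your proof is correct and uses the same key ingredients as the paper's: Proposition~\ref{prop-cruc} to identify the relevant submodule with a sum of weight spaces over $\lambda-Q^+_{\lie a}$, and Lemma~\ref{pointless} (support-independence of the $\lie g$-character) to equate those weight-space dimensions. The only organizational difference is that you first identify $\bigotimes_a U(\lie a\otimes\bc[t]).w_a$ with $U(\lie a\otimes\bc[t]).w_\psi$ via the Chinese Remainder Theorem and then apply Proposition~\ref{prop-cruc} to $W^{\lie g}(\psi)$ as a whole, whereas the paper applies it factor-by-factor to the tensor product; both versions are valid.
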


\begin{proof}
Recall from Section~\ref{fusion-product} the definition of the fusion product as the associated graded module of a tensor product. We know from Corollary~\ref{local-fusion} that $W^{\lie g}(\lambda_0)$ is isomorphic to the fusion product of 
\[ 
\bigotimes_{a \in \supp(\psi)} W^{\lie g}(\psi(a)_a).
\]
Using Proposition~\ref{prop-cruc} for $W^{\lie g}(\lambda_0)$ and the generator $w_\lambda$ we have:
\[
U(\lie n^{-}_{\lie{a}} \otimes \bc[t]).w_\lambda = \sum_{\tau \in - Q^+_{\lie a}} U(\lie n^- \otimes \bc[t])_{\tau}.w_\lambda
\]
The $\lie g$-module structures on $\bigotimes\limits_{a \in \supp(\psi)} W^{\lie g}(\psi(a)_a)$ and $W^{\lie g}(\lambda_0)$ are isomorphic (thanks to Lemma~\ref{pointless}), which implies that for all $\tau \in Q^+_{\lie a}$
\[
\dim W^{\lie g}(\lambda_0)_{\lambda- \tau} = \dim \;  \left(\bigotimes_{a \in \supp(\psi)} W^{\lie g}(\psi(a)_a)\right)_{\lambda - \tau}.
\]
The right hand side equals
\[
\dim \; \left(\bigotimes_{a \in \supp(\psi)} U(\lie n^- \otimes \bc[t]). w_a\right)_{\lambda- \tau}.
\]
With $\underline{\tau} = (\tau_a)_{a \in \supp(\psi)}$, $\tau_a \in -Q^+_{\lie a}$ and $\wt \underline{\tau} = \sum\limits_{a \in \supp(\psi)} \tau_a$, we have that this weight space equals
\[
\sum_{\wt (\underline{\tau}) = \tau} \; \;  \bigotimes_{a \in \supp(\psi)} U(\lie n^- \otimes \bc[t])_{\tau_a}. w_a = \sum_{\wt (\underline{\tau}) = \tau} \; \; \bigotimes_{a \in \supp(\psi)} U(\lie n_{\lie a}^- \otimes \bc[t])_{\tau_a}. w_a
\]
where the equation follows again from Proposition~\ref{prop-cruc}. The proposition follows now since
\[\sum_{\tau \in -Q^+_{\lie a}} U(\lie n^-_{\lie a} \otimes\bc[t])_{\tau} = U(\lie n_{\lie a}^- \otimes \bc[t]).\]
\end{proof}

We are ready to prove Theorem~\ref{main-thm2}(i): the highest weight component of a local Weyl module $W^{\lie g}(\psi)$ is isomorphic to a local Weyl module for $\lie a \otimes \bc[t]$ if $(\lie a, \wt \psi)$ is locally admissible. 
\begin{proof}
The universal property of the local Weyl module (Remark~\ref{remark-local}) gives for any $\psi \in \mathcal{E}^\lambda$ a surjective map of $\lie a \otimes \bc[t]$-modules
\[
W^\lie a(\pi(\psi)) \longrightarrow U(\lie a \otimes \bc[t])w_\psi \subset W^\lie g(\psi).
\]
To prove the theorem, it is sufficient to show that the dimensions on both sides are equal. \\
Recall here our previous observations from Section~\ref{section-local}, namely that for $\psi = \psi_1 + \ldots + \psi_k$, $\supp (\psi_i) = \{c_i \}$ and $c_i \neq c_j$ (for $i \neq j$) we have
\[
 W^\lie g(\psi) \cong U(\lie g \otimes \bc[t]/(t-c_1)^{N_1} \bc[t]). w_{\psi_1} \bigotimes \ldots \bigotimes U(\lie g \otimes \bc[t]/(t-c_s)^{N_s} \bc[t]).w_{\psi_s}\]
for $N_i$ big enough.\\
We see that the surjective map 
\[
W^\lie a(\pi(\psi)) \longrightarrow U(\lie a \otimes \bc[t])w_\psi \subset W^\lie g(\psi)
\]
is actually a surjective map (which is also component wise surjective)
\[
\bigotimes_{j=1 }^k W^\lie a(\pi(\psi_j)) \longrightarrow \bigotimes_{j = 1}^k  U(\lie a \otimes \bc[t])w_{\psi_j}.
\]
To compare dimensions it is therefore sufficient to compare dimensions for local Weyl modules supported in a single point only. Again with Lemma~\ref{pointless}, we can choose this point arbitrarily, so for simplicity we choose the origin.\\
 So let $\lambda = \omega_{i_1} + \ldots+ \omega_{i_s} \in P^+$ and $w_\lambda \in W^{\lie g}(\lambda_0)_\lambda$. We fix $\psi \in \mathcal{E}^{\lambda}$ such that $\psi(a) = \omega_{i_j}$ for some $j$ (for all $a \in \supp(\psi)$ and $\psi(b) = 0$ else). Then Proposition~\ref{prop-filtration-res} implies that 
\[
\dim U(\lie a\otimes \bc[t]).w_\lambda  = \prod_{a \in \supp(\psi)} \dim U(\lie a \otimes \bc[t]).w_a.
\]

\noindent
To know the left hand side it is enough to know each factor of the right hand side. So we pick an arbitrary factor of the right hand side and may again assume without loss of generality (Lemma~\ref{pointless}) that $a = 0$. So we have to understand
\[
\dim U(\lie a \otimes \bc[t]).w
\]
where $w$ is a generator of $W^{\lie g}((\omega_{i_j})_0)_{\omega_{i_j}}$. Lemma~\ref{funda-res} implies that if $(\lie a, \omega_{i_j})$ is locally admissible then
\begin{eqnarray}
\dim U(\lie a \otimes \bc[t]).w = \dim W^{\lie a}(\pi(\omega_{i_j})_0).
\label{eq:last-eq}
\end{eqnarray}

\noindent
Since $(\lie a, \lambda)$ is locally admissible, $(\lie a, \omega_{i_j})$ is locally admissible for all $j$ and hence
\[
\dim U(\lie a\otimes \bc[t]).w_\lambda  = \prod_{j=1}^{s}\dim W^{\lie a}(\pi(\omega_{i_j})_0)
\]
But this is nothing but the dimension of  $W^{\lie a}(\pi(\psi))$ thanks to Theorem~\ref{global-free} and the tensor product property (Theorem~\ref{tensor}), applied in the context of the current algebra $\lie a \otimes \bc[t]$. This finishes the proof.
\end{proof}


\section{Global Weyl modules and subalgebras}\label{section-5}
In order to prove Theorem~\ref{main-thm2}(ii) we will need the following two results:
\begin{prop}\label{global-quot}
Let $\lambda \in P^+$, then the assignment $w_{\pi(\lambda)} \mapsto w_\lambda$ induces a map of $\lie a \otimes \bc[t]$-modules
\[ 
W^{\lie a}(\pi(\lambda)) \longrightarrow W^{\lie g}(\lambda).
\]
\end{prop}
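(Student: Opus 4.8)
The plan is to invoke the universal property built into Definition~\ref{global-defn}: the global Weyl module $W^{\lie a}(\pi(\lambda))$ is by construction the $\lie a\otimes\bc[t]$-module generated by a single vector $w_{\pi(\lambda)}$ subject to three families of relations, so to obtain an $\lie a\otimes\bc[t]$-module homomorphism $W^{\lie a}(\pi(\lambda))\to W^{\lie g}(\lambda)$ with $w_{\pi(\lambda)}\mapsto w_\lambda$ it suffices to verify that $w_\lambda$, viewed inside the restriction of $W^{\lie g}(\lambda)$ to $\lie a\otimes\bc[t]$, satisfies precisely those relations with $\pi(\lambda)$ in place of $\lambda$. (The image of the resulting map is then automatically $U(\lie a\otimes\bc[t]).w_\lambda$.)

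First I would record the structural facts from Section~\ref{defn-sub}: the Levi subalgebra satisfies $\lie n^+_{\lie a}\subseteq\lie n^+$, $\lie h_{\lie a}\subseteq\lie h$ and $R^+_{\lie a}\subseteq R^+$, that $\lie h_{\lie a}=\sum_{\alpha\in R'}[\lie g_\alpha,\lie g_{-\alpha}]$ is spanned by the coroots $h_\beta$ with $\beta\in R_{\lie a}$ (so in particular $h_\beta\in\lie h_{\lie a}$ for every $\beta\in R^+_{\lie a}$), and that $\pi\colon\lie h^*\to\lie h_{\lie a}^*$ is the restriction map dual to the inclusion $\lie h_{\lie a}\hookrightarrow\lie h$, so that $\pi(\lambda)(h)=\lambda(h)$ for all $h\in\lie h_{\lie a}$.

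Granting this, the three verifications are immediate. For the highest-weight relation, $\lie n^+_{\lie a}\otimes\bc[t]\subseteq\lie n^+\otimes\bc[t]$ gives $(\lie n^+_{\lie a}\otimes\bc[t]).w_\lambda\subseteq(\lie n^+\otimes\bc[t]).w_\lambda=0$. For the Cartan relation, any $h\in\lie h_{\lie a}$ satisfies $(h\otimes 1).w_\lambda=\lambda(h)w_\lambda=\pi(\lambda)(h)w_\lambda$. For the third relation, given $\beta\in R^+_{\lie a}$ we have $x_\beta^-\otimes 1\in\lie n^-\otimes\bc[t]$ and $\pi(\lambda)(h_\beta)=\lambda(h_\beta)$, so $(x_\beta^-\otimes 1)^{\pi(\lambda)(h_\beta)+1}.w_\lambda=(x_\beta^-\otimes 1)^{\lambda(h_\beta)+1}.w_\lambda=0$ by the defining relations of $W^{\lie g}(\lambda)$. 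The universal property then produces the desired homomorphism.

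There is no genuine obstacle here; the only point deserving a line of attention is the identity $\pi(\lambda)(h_\beta)=\lambda(h_\beta)$, i.e.\ that the $\lie a$-coroots lie in $\lie h_{\lie a}$ and that $\pi$ is literally restriction of functionals — both of which are part of the definition of Levi subalgebra adopted in Section~\ref{defn-sub}. It is worth flagging that this map is in general not injective: its injectivity under the admissibility hypothesis is exactly the content of Theorem~\ref{main-thm2}(ii), to be obtained later via the bimodule structure over $\ba^{\lie a}_{\pi(\lambda)}\subseteq\ba^{\lie g}_\lambda$ together with a rank computation.
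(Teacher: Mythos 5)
Your argument is correct and is exactly the paper's proof: the paper also verifies the three defining relations of $W^{\lie a}(\pi(\lambda))$ for $w_\lambda$, using that the triangular decompositions are compatible and that $\pi$ is restriction of functionals (so $\pi(\lambda)(h_\beta)=\lambda(h_\beta)$ for $\beta\in R^+_{\lie a}$). You have simply written out the details the paper dismisses as obvious.
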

\noindent
Theorem~\ref{main-thm2}(ii) claims that this induced map is injective.
\begin{proof}
We have to check that the defining relations of $W^{\lie a}(\pi(\lambda))$ (see Definition~\ref{global-defn}) are satisfied by $w_\lambda$. But this is obvious since the $\lie a$-weight of $w_\lambda$ is $\pi(\lambda)$ and the triangular decompositions of $\lie a$ and $\lie g$ are compatible.
\end{proof}

\noindent
We have an induced map of algebras
\begin{eqnarray}
\ba_{\pi(\lambda)}^{\lie a} \longrightarrow \ba_{\lambda}^{\lie g}
\label{existence-A}
\end{eqnarray}
which should be studied in more detail:

\begin{prop}\label{injective-A} If $(\lie a, \lambda)$ is globally admissible, then the induced map
\[
\ba_{\pi(\lambda)}^{\lie a} \longrightarrow \ba_{\lambda}^{\lie g}.
\]
is injective.
\end{prop}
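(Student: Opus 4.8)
The plan is to use the explicit description of both algebras as symmetric powers of polynomial rings, together with the compatibility of the two weight lattices under the globally admissible hypothesis. Recall from Theorem~\ref{ba-lambda} that if $\lambda = \sum_{i\in I} m_i\omega_i$ then $\ba_\lambda^{\lie g}\cong \bigotimes_{i\in I} S^{m_i}(\bc[t])$, and analogously $\ba_{\pi(\lambda)}^{\lie a}\cong\bigotimes_{j\in J} S^{n_j}(\bc[t])$ where $\pi(\lambda)=\sum_{j\in J}n_j\tau_j$. The map \eqref{existence-A} is the one induced by $w_{\pi(\lambda)}\mapsto w_\lambda$ on the quotients of $U(\lie h_{\lie a}\otimes\bc[t])$ and $U(\lie h\otimes\bc[t])$ by the respective annihilators; concretely it is the restriction to $U(\lie h_{\lie a}\otimes\bc[t])$ of the surjection $U(\lie h\otimes\bc[t])\twoheadrightarrow\ba_\lambda^{\lie g}$, pushed through the inclusion $\lie h_{\lie a}\otimes\bc[t]\hookrightarrow\lie h\otimes\bc[t]$.

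First I would set up the dual picture via Theorem~\ref{ba-lambda}: $\ba_\lambda^{\lie g}\cong U(\lie h\otimes\bc[t])/I$ with $I=\bigcap_{\psi\in\mathcal{E}^\lambda}\ann(v_\psi)$, and similarly $\ba_{\pi(\lambda)}^{\lie a}\cong U(\lie h_{\lie a}\otimes\bc[t])/I_{\lie a}$ with $I_{\lie a}=\bigcap_{\phi\in\mathcal{E}_{\lie a}^{\pi(\lambda)}}\ann(v_\phi)$. Injectivity of $\ba_{\pi(\lambda)}^{\lie a}\to\ba_\lambda^{\lie g}$ amounts to the statement that if $u\in U(\lie h_{\lie a}\otimes\bc[t])$ annihilates $v_\psi$ for every $\psi\in\mathcal{E}^\lambda$ (i.e. $u$ maps to $0$ in $\ba_\lambda^{\lie g}$), then $u$ already annihilates $v_\phi$ for every $\phi\in\mathcal{E}_{\lie a}^{\pi(\lambda)}$. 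Here is where global admissibility enters decisively: by Proposition~\ref{surjective-E} the map $\pi:\mathcal{E}^\lambda\to\mathcal{E}_{\lie a}^{\pi(\lambda)}$ is \emph{surjective}, so every $\phi\in\mathcal{E}_{\lie a}^{\pi(\lambda)}$ is $\pi(\psi)$ for some $\psi\in\mathcal{E}^\lambda$. Moreover, Lemma~\ref{res-simple} (or rather its proof) tells us that $U(\lie a\otimes\bc[t]).v_\psi\cong V^{\lie a}(\pi(\psi))=V^{\lie a}(\phi)$, and in particular the one-dimensional $\lie h_{\lie a}\otimes\bc[t]$-submodule through $v_\psi$ realizes the same character of $U(\lie h_{\lie a}\otimes\bc[t])$ as the defining character of $v_\phi$. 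Thus the action of $U(\lie h_{\lie a}\otimes\bc[t])$ on $v_\psi$ factors through the same quotient as its action on $v_\phi$; concretely $\ann_{U(\lie h_{\lie a}\otimes\bc[t])}(v_\psi)=\ann_{U(\lie h_{\lie a}\otimes\bc[t])}(v_\phi)$.

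Putting these together: an element $u$ in the kernel of $\ba_{\pi(\lambda)}^{\lie a}\to\ba_\lambda^{\lie g}$ lifts to $u\in U(\lie h_{\lie a}\otimes\bc[t])$ with $u\in I=\bigcap_{\psi\in\mathcal{E}^\lambda}\ann(v_\psi)$, so $u\in\ann(v_\psi)$ for all $\psi\in\mathcal{E}^\lambda$; by surjectivity of $\pi$ and the identification of annihilators in the previous paragraph, $u\in\ann(v_\phi)$ for all $\phi\in\mathcal{E}_{\lie a}^{\pi(\lambda)}$, hence $u\in I_{\lie a}$ and so $u=0$ in $\ba_{\pi(\lambda)}^{\lie a}$. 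This gives injectivity. I expect the main obstacle to be bookkeeping: one must be careful that the map in \eqref{existence-A} really is the restriction of the quotient map in the way described, and that the right action of $\lie h_{\lie a}\otimes\bc[t]$ on $w_\lambda\in W^{\lie g}(\lambda)$ is computed by the same element of $\ba_\lambda^{\lie g}$ that governs the $U(\lie h_{\lie a}\otimes\bc[t])$-action on the vectors $v_\psi$ — this is where Theorem~\ref{ba-lambda}'s second description $I=\bigcap_\psi\ann(v_\psi)$ is essential, since it lets us test elements of $\ba_\lambda^{\lie g}$ on the explicit modules $V(\psi)$. If one prefers the symmetric-power model, an alternative is to check directly that on polynomial generators the map $S^{n_j}(\bc[t])\hookrightarrow\bigotimes_{i}S^{m_i}(\bc[t])$ induced by $\omega_{i_j}\mapsto\tau_j$ (using the chosen $i_j$ from the proof of Proposition~\ref{surjective-E}) is the obvious inclusion onto a tensor factor, which is manifestly injective; but the annihilator-intersection argument is cleaner and avoids choosing coordinates.
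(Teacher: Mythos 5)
Your argument is correct and is essentially the paper's own proof: both reduce injectivity, via the second description in Theorem~\ref{ba-lambda}, to comparing $\bigcap_{\phi}\ann_{U(\lie h_{\lie a}\otimes\bc[t])}(v_\phi)$ with $U(\lie h_{\lie a}\otimes\bc[t])\cap\bigcap_{\psi}\ann_{U(\lie h\otimes\bc[t])}(v_\psi)$, and both close the gap using the surjectivity of $\pi:\mathcal{E}^\lambda\to\mathcal{E}_{\lie a}^{\pi(\lambda)}$ from Proposition~\ref{surjective-E} together with the identification $U(\lie a\otimes\bc[t]).v_\psi\cong V^{\lie a}(\phi)$. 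The only difference is cosmetic (you argue directly where the paper argues by contraposition), so there is nothing to add.
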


\noindent
As for Proposition~\ref{surjective-E}, it is clear that the map is not injective if $(\lie a, \lambda)$ is not globally admissible.
\begin{proof}
By Theorem~\ref{global-free} we have to show 
\[
\bigcap_{ \phi \in \mathcal{E}^{\pi(\lambda)}} \ann_{U(\lie h_{\lie a} \otimes \bc[t])} (v_\phi) =  U(\lie h_ {\lie a} \otimes \bc[t])  \cap \bigcap_{ \psi \in \mathcal{E}^{\lambda} } \ann_{U(\lie h \otimes \bc[t])} (v_\psi). 
\]
The left hand side is clearly contained in the right hand side by \eqref{existence-A}. Let $u \in U(\lie h_{\lie a} \otimes \bc[t])$ such that 
\[
u  \notin \bigcap_{ \phi \in \mathcal{E}^{\pi(\lambda)}} \ann_{U\lie h_{\lie a} \otimes \bc[t])} (v_\phi).
\] 
Then there exists $\phi \in \mathcal{E}^{\pi(\lambda)}$ such that $u.v_{\phi} \neq 0$ in $V^\lie a(\phi)$. Since $(\lie a, \lambda)$ is globally admissible, we know by Proposition~\ref{surjective-E} that there exists $\psi \in \mathcal{E}^{\lambda}$ such that
\[
\pi(\psi) = \phi \text{ and } V^{\lie a}(\phi) \cong_{\lie a \otimes \bc[t]} U(\lie a \otimes \bc[t]).v_\psi
\]
This implies that $u.v_\psi \neq 0$ in $V^\lie g(\psi)$ and hence 
\[
u \notin \bigcap_{ \psi \in \mathcal{E}^{\lambda}} \ann_{U(\lie h \otimes \bc[t])} (v_\psi)
\]
which proves the proposition.
\end{proof}

From now on, we assume that $\Pi_{\lie a} \subset \Pi$. This implies that $\lie a$ is generated by root vectors of simple $\lie g$-roots.
\begin{thm} 
Let $\lambda \in P^+$ be such that $(\lie a, \lambda)$ is globally admissible. Let $w_\lambda$ be a highest weight generator of $W^{\lie g}(\lambda)$, then
\[
U(\lie a \otimes \bc[t]).w_\lambda \cong_{\lie a \otimes \bc[t]} W^{\lie a}(\pi(\lambda)).
\]
\end{thm}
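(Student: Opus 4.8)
The plan is to combine the three ingredients already assembled: Proposition~\ref{global-quot} gives a surjection of $\lie a \otimes \bc[t]$-modules $W^{\lie a}(\pi(\lambda)) \twoheadrightarrow U(\lie a \otimes \bc[t]).w_\lambda$, so it suffices to show this map is injective; Proposition~\ref{injective-A} gives an injection $\ba^{\lie a}_{\pi(\lambda)} \hookrightarrow \ba^{\lie g}_\lambda$; and Theorem~\ref{global-free} says both $W^{\lie a}(\pi(\lambda))$ and $W^{\lie g}(\lambda)$ are free of finite rank over $\ba^{\lie a}_{\pi(\lambda)}$, respectively $\ba^{\lie g}_\lambda$. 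The strategy is to view $U(\lie a \otimes \bc[t]).w_\lambda$ as an $\ba^{\lie a}_{\pi(\lambda)}$-module (it is, since the right action of $\lie h_{\lie a}\otimes\bc[t]$ on $W^{\lie g}(\lambda)$ preserves $U(\lie a\otimes\bc[t]).w_\lambda$, and it factors through $\ba^{\lie a}_{\pi(\lambda)}$ by Proposition~\ref{injective-A} together with the fact that $\lie h_{\lie a}\otimes\bc[t]$ acts on $w_\lambda \in W^{\lie g}(\lambda)$ through $\ba^{\lie g}_\lambda$), and then to show it is \emph{free} over $\ba^{\lie a}_{\pi(\lambda)}$ of rank equal to $\dim W^{\lie a}(\pi(\lambda)\otimes \text{pt})$, i.e. the rank of the global Weyl module for $\lie a$. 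Once the rank matches, the surjection $W^{\lie a}(\pi(\lambda)) \twoheadrightarrow U(\lie a\otimes\bc[t]).w_\lambda$ is a surjection of free $\ba^{\lie a}_{\pi(\lambda)}$-modules of the same finite rank, hence an isomorphism.

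The key computation is the rank, and here I would localize at a maximal ideal $\bm_\phi \subset \ba^{\lie a}_{\pi(\lambda)}$ corresponding to some $\phi \in \mathcal{E}^{\pi(\lambda)}_{\lie a}$, or better, specialize: for any such $\bm_\phi$, by global admissibility and Proposition~\ref{surjective-E} there is $\psi \in \mathcal{E}^\lambda$ with $\pi(\psi)=\phi$, and one can choose $\psi$ to be supported on a single point (take $\phi$ itself supported on one point; then $\psi$ supported on one point works by the argument in Proposition~\ref{surjective-E}). Specializing $W^{\lie g}(\lambda)$ at the corresponding maximal ideal $\bn_\psi$ of $\ba^{\lie g}_\lambda$ gives the local Weyl module $W^{\lie g}(\psi)$, and since $\bm_\phi = \bn_\psi \cap \ba^{\lie a}_{\pi(\lambda)}$ one gets
\[
\big(U(\lie a\otimes\bc[t]).w_\lambda\big) \otimes_{\ba^{\lie a}_{\pi(\lambda)}} \ba^{\lie a}_{\pi(\lambda)}/\bm_\phi \;\cong\; U(\lie a\otimes\bc[t]).w_\psi \;\subseteq\; W^{\lie g}(\psi),
\]
whose dimension, by Theorem~\ref{main-thm2}(i) (already proven), equals $\dim W^{\lie a}(\pi(\psi)) = \dim W^{\lie a}(\phi)$, which by Lemma~\ref{pointless} applied to $\lie a$ is the rank of $W^{\lie a}(\pi(\lambda))$ and is independent of $\phi$. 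Thus the $\ba^{\lie a}_{\pi(\lambda)}$-module $U(\lie a\otimes\bc[t]).w_\lambda$ is finitely generated (being a quotient of $W^{\lie a}(\pi(\lambda))$, which is finitely generated over the Noetherian ring $\ba^{\lie a}_{\pi(\lambda)}$) with all fiber dimensions equal to a constant $d = \rk W^{\lie a}(\pi(\lambda))$; a finitely generated module over a polynomial ring (hence a Noetherian, in fact regular, domain) with constant fiber dimension is locally free, and since $\ba^{\lie a}_{\pi(\lambda)}$ is a polynomial ring it is in fact free of rank $d$.

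I expect the main obstacle to be the passage from "constant fiber dimension" to "free," together with the bookkeeping needed to know we are specializing at \emph{every} maximal ideal (rather than just a dense set) — i.e. ensuring that for \emph{all} $\phi \in \mathcal{E}^{\pi(\lambda)}_{\lie a}$, not only the "regular" ones, we can find a one-point preimage $\psi \in \mathcal{E}^\lambda$; this is exactly what global admissibility buys us via Proposition~\ref{surjective-E}, but one must check the intersection-of-ideals identity $\bm_\phi = \bn_\psi \cap \ba^{\lie a}_{\pi(\lambda)}$ carefully. A clean way to finish, avoiding delicate commutative algebra, is: $U(\lie a\otimes\bc[t]).w_\lambda$ is a submodule of the free (hence torsion-free, hence flat over the domain $\ba^{\lie a}_{\pi(\lambda)}$) module $W^{\lie g}(\lambda)$, so it is torsion-free; a finitely generated torsion-free module over a regular ring of dimension one after localizing... — but since $\ba^{\lie a}_{\pi(\lambda)}$ has higher dimension in general, the safest route is the constant-fiber-dimension argument above, or simply comparing Hilbert series: both $W^{\lie a}(\pi(\lambda))$ and $U(\lie a\otimes\bc[t]).w_\lambda$ are graded (after reducing to $\psi$ supported at $0$, where all relations are homogeneous), the surjection is graded, and graded-dimension equality in each degree — which follows from Proposition~\ref{prop-filtration-res} and Lemma~\ref{funda-res} exactly as in the proof of Theorem~\ref{main-thm2}(i) — forces it to be an isomorphism. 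I would present the graded/Hilbert-series version as the main line and remark that the general (non-homogeneous) case follows by the pullback automorphism $\sigma_c$ and the tensor product property as in Section~\ref{fusion-product}.
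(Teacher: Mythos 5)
Your proposal is correct and follows essentially the same route as the paper: establish the surjection from the universal property, make $U(\lie a\otimes\bc[t]).w_\lambda$ a right $\ba_{\pi(\lambda)}^{\lie a}$-module, and compute the fiber at each maximal ideal $\bm_\phi$ by lifting $\phi$ to $\psi\in\mathcal{E}^\lambda$ via global admissibility and Proposition~\ref{surjective-E}, then invoking Theorem~\ref{main-thm2}(i) to identify the fiber with $W^{\lie a}(\phi)$, so that constant fiber dimension forces freeness of the correct rank and hence the surjection is an isomorphism. The paper handles the final "constant fiber dimension implies free" step by citing the appendix of \cite{FMS12} rather than your Hilbert-series or locally-free arguments, but this is only a difference in how that standard step is packaged.
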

\begin{proof}
The universal property of the global Weyl modules (see Remark~\ref{remark-global}) implies that left hand side is a quotient of the right hand side.\\ 
We will show first that $U(\lie a \otimes \bc[t]).w_\lambda$ is a right $\ba_{\pi(\lambda)}^{\lie a}$-module. Since $W^{\lie g}(\lambda)$ is a right $\ba_\lambda^{\lie g}$-module, with \eqref{existence-A} we obtain an action of $\ba_{\pi(\lambda)}^{\lie a}$ on $W^{\lie g}(\lambda)$. But clearly $U(\lie a \otimes \bc[t]).w_\lambda$ is stable under this action, since it is stable under $U(\lie h_{\lie a} \otimes \bc[t])$.

\noindent
Now, we are to show that $U(\lie a \otimes \bc[t]).w_\lambda$ is a \textit{free} $\ba_{\pi(\lambda)}^{\lie a}$-module of the same rank as the free $\ba_{\pi(\lambda)}^{\lie a}$-module-module $W^{\lie a}(\pi(\lambda))$ (see Theorem~\ref{global-free}). For this we use the same idea as in \cite{CP01}, written in full detail in \cite{FMS12}. That is to say the following: Let $\bm \in \max \ba_{\pi(\lambda)}^{\lie a}$ be a maximal ideal, one has to show that the dimension of the cyclic $\lie a \otimes \bc[t]$-module
\begin{eqnarray}
U(\lie a \otimes \bc[t]).w_\lambda \otimes_{\ba_{\pi(\lambda)}^{\lie a}} \ba_{\pi(\lambda)}^{\lie a}/\bm
\label{eq:res-global}
\end{eqnarray}
is finite and independent of the chosen maximal ideal, and equals to $ \dim W^{\lie a}(\phi) $ for all $\phi \in \mathcal{E}^{\pi(\lambda)}$, which is the rank of $W^{\lie a}(\pi(\lambda))$.\\

\noindent
Let $\bm \in \max \ba_{\pi(\lambda)}^{\lie a}$, then $\bm$ corresponds to a finitely supported function $\phi \in \mathcal{E}_{\lie a}^{\pi(\lambda)}$ (see Section~\ref{section-3}). We have
\begin{eqnarray}
W^{\lie a}(\phi) \twoheadrightarrow U(\lie a \otimes \bc[t]).w_\lambda \otimes_{\ba_{\pi(\lambda)}^{\lie a}} \ba_{\pi(\lambda)}^{\lie a}/\bm,
\label{eq:res-local-weyl}
\end{eqnarray}
this follows from the universal property (Remark~\ref{remark-local} and the fact that the right hand side is cyclic of highest current weight corresponding to $\phi$). Hence the dimension of the right hand side is finite.\\
On the other hand, since $(\lie a, \lambda)$ is globally admissible, we know by Proposition~\ref{surjective-E} that there exists $\psi \in \mathcal{E}^{\lambda}$, such that $\pi(\psi) = \phi$. This implies
\[
 U(\lie a \otimes \bc[t]).w_\lambda \otimes_{\ba_{\pi(\lambda)}^{\lie a}} \ba_{\pi(\lambda)}^{\lie a}/\bm \cong_{\lie a \otimes \bc[t]} U(\lie a \otimes \bc[t]).w_{\psi}
\]
where $w_\psi$ is a generator of the $\lie g \otimes \bc[t]$-module $W^{\lie g}(\psi)$. With Theorem~\ref{main-thm2}(i) we know
\[
U(\lie a \otimes \bc[t]).w_{\psi} \cong_{\lie a \otimes \bc[t]} W^{\lie a}(\pi(\psi)) = W^{\lie a}(\phi).
\]
So the dimension of \eqref{eq:res-global} is equals to the dimension of $W^{\lie a}(\phi)$, hence the map in \eqref{eq:res-local-weyl} is an isomorphism. This also implies that the dimension of \eqref{eq:res-global} is independent of the chosen maximal ideal. Using the argumentation in the appendix of \cite{FMS12} we can summarize:\\ 
$U(\lie a \otimes \bc[t]).w_\lambda$ is a free $\ba_{\pi(\lambda)}^{\lie a}$ module of the same rank as the global Weyl module $W^{\lie a}(\pi(\lambda))$. Hence the canonical map 
\[
W^{\lie a}(\pi(\lambda))  \twoheadrightarrow U(\lie a \otimes \bc[t]).w_\lambda
\]
is an isomorphism and the theorem is proven.
\end{proof}


\begin{appendix}
\section{Levi subalgebras}
We give a complete list of all simple Levi subalgebras obtained from root spaces of closed subsets of $R$.

\subsection{Roots of Lie algebras of classical type}\label{simple-sub}
 First we give a list of the positive roots of $\lie g$ (see \cite{Car05}):
\begin{itemize}
\item If $\lie g$ is of type $A_n$, then
\[
R^+= \{ \varepsilon_i - \varepsilon_j \; | \; 1 \leq i < j \leq n+1 \}.
\]
\item If $\lie g$ is of type $B_n$, then the long positive (resp. short positive) roots are
\[
(R^+)^{\ell} = \{ \varepsilon_i \pm \varepsilon_j  \; | \; 1 \leq i < j \leq n \} \text{ and } (R^+)^{s} = \{ \varepsilon_i \; | \; 1 \leq i \leq n \}.
\]
\item If $\lie g$ is of type $C_n$, then
\[
(R^+)^{\ell} = \{ 2 \varepsilon_i \; | \; 1 \leq i \leq n \} \text{ and }  (R^+)^{s} = \{ \varepsilon_i \pm \varepsilon_j  \; | \; i \leq 1 \leq i < j \leq n \}.
\]
\item If $\lie g$ is of type $D_n$, then 
\[
R^+ = \{ \varepsilon_i \pm \varepsilon_j  \; | \; 1 \leq i < j \leq n \}.
\]
\end{itemize}

\subsection{Roots of simple Levi subalgebras}\label{a2}
The following gives a description of all possible sets of simple roots $\Pi_{\lie a}$ of simple Levi subalgebras. The proofs are omitted, since the results are obtained by simple case by case considerations.
\begin{itemize}
\item In all types: If $\Pi_{\lie a} = \{ \varepsilon_{i_1} - \varepsilon_{i_2}, \varepsilon_{i_2} - \varepsilon_{i_3} , \ldots, \varepsilon_{i_s} - \varepsilon_{i_{s+1}} \}$
with $1 \leq i_1 < i_2 < \ldots < i_s < i_{s+1} \leq n+1$  if $\lie g $ is of type $A_n$ (resp. $\leq n$ if $\lie g$ is of other type) $\Rightarrow$ $\lie a$ is of type $A_s$. 
\item
Let $\lie g$ be of type $B_n$ and $\lie a \subseteq \lie g$ a simple Lie subalgebra of rank $s$, then there are several cases:
\begin{enumerate}
\item If $\varepsilon_i \in \Pi_{\lie a}$ for some $i \in I$, then $\lie a$ is of type $B_s$ and $\varepsilon_i$ is the unique simple short root. Then 
\[
\Pi_{\lie a} = \{  \varepsilon_{i_{s-1}} - \varepsilon_{i_{s - 2}}, \ldots, \varepsilon_{i_1} - \varepsilon_i, \varepsilon_{i} \}
\] 
\item If $\{ \varepsilon_{i} + \varepsilon_j, \varepsilon_{i} - \varepsilon_{j}\} \subset \Pi_{\lie a}$ for some $i < j $, then $\lie a$ is of type $D_s$ and $ \varepsilon_{i} + \varepsilon_j, \varepsilon_{i} - \varepsilon_{j}$ correspond to the spin nodes. Then
\[
\Pi_{\lie a} =  \{\varepsilon_{i_{s-2}} - \varepsilon_{i_{s - 3}}, \ldots, \varepsilon_{i_1} - \varepsilon_i, \varepsilon_{i} + \varepsilon_j, \varepsilon_{i} - \varepsilon_{j} \}
\]
\item In the remaining cases, $\lie a$ is of type $A_s$:
 If $\varepsilon_{i} + \varepsilon_{j} \in \Pi_{\lie a}$ for some $i < j$, then 
\begin{eqnarray}
 \Pi_{\lie a} = \{ \varepsilon_{i_{\ell}} - \varepsilon_{i_{\ell - 1}}, \ldots, \varepsilon_{i_1} - \varepsilon_i, \varepsilon_{i} + \varepsilon_{j} , \varepsilon_{j_1} - \varepsilon_j, \ldots, \varepsilon_{j_{s - \ell -1}} - \varepsilon_{j_{s- \ell -2}} \}
\label{type:BN-AN-BAD}
\end{eqnarray}
for some $ 1 \leq i_{\ell} < \ldots < i_1 < i, 1 \leq j_{s - \ell - 1} < \ldots < j_1 < j $ and $i_k \neq j_{k'}$ for all $k,k'$.
\end{enumerate}
 
\item
Let $\lie g$ be of type $C_n$ and $\lie a \subseteq \lie g$ a simple Lie subalgebra of rank $s$, then there are several cases:
\begin{enumerate}
\item If $2 \varepsilon_i \in \Pi_{\lie a}$ for some $i \in I$, then $\lie a$ is of type $C_s$ and $2 \varepsilon_i$ is the unique simple long root.
\item In the remaining cases $\Pi_{\lie a}$ is equals to \eqref{type:BN-AN-BAD} and $\lie a$ is of type $A_s$.
\end{enumerate}

\item
Let $\lie g$ be of type $D_n$ and $\lie a \subseteq \lie g$ a simple Lie subalgebra of rank $s$, then there are several cases
\begin{enumerate}
\item If $\{ \varepsilon_{i} + \varepsilon_j, \varepsilon_{i} - \varepsilon_{j}\} \subset \Pi_{\lie a}$ for some $i < j \leq n $, then $\lie a$ is of type $D_s$ and $ \varepsilon_{i} + \varepsilon_j, \varepsilon_{i} - \varepsilon_{j}$ correspond to the spin nodes. Then
\[
\Pi_{\lie a} =  \{  \{  \varepsilon_{i_{s-2}} - \varepsilon_{i_{s - 3}}, \ldots, \varepsilon_{i_1} - \varepsilon_i, \varepsilon_{i} + \varepsilon_j, \varepsilon_{i} - \varepsilon_{j} \}
\]
\item In the remaining cases $\Pi_{\lie a}$ is equals to \eqref{type:BN-AN-BAD} and $\lie a$ is of type $A_s$.
\end{enumerate}
\end{itemize}
\bigskip
\bigskip
\noindent
\textbf{Acknowledgments:} I would like to thank Craig~S. for helpful discussions and the anonymous referee for valuable suggestions.
\end{appendix}
\bibliographystyle{alpha}
\bibliography{weyl-sub-biblist}
\end{document}